\documentclass[pdflatex,sn-mathphys-num]{sn-jnl}

\usepackage{graphicx}%
\usepackage{multirow}%
\usepackage{amsmath,amssymb,amsfonts}%
\usepackage{amsthm}%
\usepackage{mathrsfs}%
\usepackage[title]{appendix}%
\usepackage{xcolor}%
\usepackage{textcomp}%
\usepackage{manyfoot}%
\usepackage{booktabs}%
\usepackage{algorithm}%
\usepackage{algorithmicx}%
\usepackage{algpseudocode}%
\usepackage{listings}%

\theoremstyle{thmstyleone}%
\newtheorem{theorem}{Theorem}
\newtheorem{proposition}[theorem]{Proposition}%

\theoremstyle{thmstyletwo}%
\newtheorem{example}{Example}%
\theoremstyle{thmstylethree}%
\newtheorem{remark}{Remark}%
\theoremstyle{thmstylethree}%
\newtheorem{lemma}{Lemma}%
\theoremstyle{thmstylethree}%
\newtheorem{corollary}{Corollary}%
\theoremstyle{thmstylethree}%
\newtheorem{definition}{Definition}%

\usepackage{tikz}
\usetikzlibrary{arrows.meta} 
\usepackage{caption} 

\begin{document}

\title[Article Title]{Generalized geometric constants related to Birkhoff orthogonality in Banach spaces}


\author[1]{\fnm{Junxiang} \sur{Qi}}\email{Y25060009@stu.aqnu.edu.cn}

\author[1]{\fnm{Qian} \sur{Li}}\email{Y23060030@stu.aqnu.edu.cn}
\equalcont{These authors contributed equally to this work.}

\author[1]{\fnm{Zhouping} \sur{Yin}}\email{yzp@aqnu.edu.cn}
\equalcont{These authors contributed equally to this work.}

\author*[1]{\fnm{Qi} \sur{Liu}}\email{liuq67@aqnu.edu.cn}
\equalcont{These authors contributed equally to this work.}

\author[2]{\fnm{Jiaye} \sur{Bi}}\email{bijiaye@mail2.sysu.edu.cn}
\equalcont{These authors contributed equally to this work.}

\author[3]{\fnm{Yuankang} \sur{Fu}}\email{ykfu@fafu.edu.cn}
\equalcont{These authors contributed equally to this work.}

\author[2]{\fnm{Yongjin} \sur{Li}}\email{stslyj@mail.sysu.edu.cn}
\equalcont{These authors contributed equally to this work.}

\affil*[1]{\orgdiv{School of Mathematics and Statistics}, \orgname{Anqing Normal University}, \orgaddress{\city{Anqing}, \postcode{246133}, \country{China}}}

\affil[2]{\orgdiv{Department of Mathematics}, \orgname{Sun Yat-sen University}, \orgaddress{\city{Guangzhou}, \postcode{510275}, \country{P. R. China}}}

\affil[3]{\orgdiv{College of Computer and Information Sciences}, \orgname{Fujian Agriculture and Forestry University}, \orgaddress{\city{Fuzhou}, \postcode{350002}, \country{P. R. China}}}


\abstract{	In this paper, based on Birkhoff orthogonality, we introduce two geometric constants \(A_{t}^{B}(X)\) and \(D_{t}^{B}(X)\) in Banach spaces, which generalize the existing geometric constants.
	Moreover, one of these constants is investigated in connection with the newly introduced orthogonality relation.
	We systematically study the basic properties of the two constants, including their upper and lower bounds, and establish equivalent characterizations for a Banach space to be uniformly non-square.
	Additionally, we explore the relationship between \(D_{t}^{B}(X)\) and the modulus of convexity.
	Finally, we present several applications of the two newly proposed geometric constants.
}

\keywords{Banach spaces, geometric constants, Birkhoff orthogonality}



\maketitle
\section{Introduction}
In contemporary research, specific geometric constants have been introduced and explored in academic research. These constants can quantitatively describe the geometric properties of spaces. Among them, orthogonality, as a core concept in Euclidean geometry, is not only an important part of the axiom system but also provides key support for many basic theorems. For more papers on geometric constants, refer to \cite{9,13,14,15,16,17,18,19,20,22}. 

Geometric constants endowed with orthogonality present an interesting research topic. Studying geometric constants with orthogonality forms requires considering orthogonal relations, which in turn helps explore whether local variables satisfying specific orthogonality conditions can replace global variables when analyzing the geometric properties of Banach spaces. For more papers on geometric constants related to orthogonality, refer to \cite{4,27,30,31,32}.

In the theory of Banach spaces, finding suitable generalized forms for this core concept has always been a research focus in the field of geometric analysis. Scholars have successively proposed various generalized definitions such as Roberts orthogonality, Birkhoff orthogonality, and isosceles orthogonality. 

In 1934, Roberts \cite{1} proposed Roberts orthogonality: Let \(X\) be a real Banach space, \(x\) is said to be Roberts orthogonal to \(y\) for any \(x, y \in X\), (\(x \perp_R y\)) if and only if
$$
 \forall t \in \mathbb{R},\quad \|x+t y\|=\|x-t y\|
$$

Birkhoff first introduced the concept of Birkhoff orthogonality in \cite{2}: Let \(X\) be a real Banach space, we define \(x\)
to be Birkhoff orthogonal to \(y\)
(written as \(x \perp_B y\)
) if and only if
$$
\|x+t y\| \geqslant\|x\|, \quad \forall t \in \mathbb{R}
$$

James \cite{3} show that isosceles orthogonality in 1947: \(x\) is said to be isosceles orthogonal to \(y\) (\(x \perp_I y\)) if and only if

$$
\|x+y\|=\|x-y\| .
$$

The constant $A_2(X)$, has been extensively investigated by Baronti et al. \cite{11}. which is defined as follows:
$$
A_2(X)=\sup \left\{\frac{\|x+y\|+\|x-y\|}{2}: x, y \in S_X\right\}.
$$

On the unit sphere $S_X$, suppose that the points $y$ and $-y$ are antipodal, then we can obtain 2, $\|x+y\|$ and $\|x-y\|$ can be interpreted as the side lengths of the triangle $T_{x y}$, the parameter $A_2(X)$ can be characterized as the supremum of the arithmetic mean of the side lengths of $T_{x y}$.

Birkhoff orthogonality is intimately associated with the geometric structure and constitutes the primary motivation for the introduction of the parameter associated with Birkhoff orthogonality, building upon the aforementioned coefficient \cite{5}:
$$
A_{2}(X,B)=\sup \left\{\frac{\|x+y\|+\|x-y\|}{2}: x, y \in S_X,x \perp_B y\right\}.
$$

In addition, some properties of constant are as follows:\\

\rm{(1)} For any Banach space $X$, the inequality $\sqrt{2} \leq A_{2}(X,B) \leq 2$ holds;

\rm{(2)} Suppose that $X$ is a Radon plane, then we have $A_{2}(X,B) \leq \frac{3}{2}$;

\rm{(3)} For a given Banach space $X$, the equality $A_{2}(X,B)=2$
holds if and only if $X$
does not satisfy the uniformly non-square property.

\rm{(4)} Assume that $X$ is a Radon plane, $ A_2(X,B)=\frac{3}{2}$ if and only if the unit sphere $S_{X}$ is an affine-regular hexagon.

A Banach space $X$ is said to be uniformly non-square \cite{10} if there exists a constant $\delta \in(0,1)$
such that for all $x, y \in S_X$, at least one of the inequalities 
$\frac{\|x+y\|}{2} \leq 1-\delta$
or $\frac{\|x-y\|}{2} \leq 1-\delta$
hold. And defined the James constant \cite{10}:

$$
J(X)=\sup \left\{\min \{\|x+y\|,\|x-y\|\}: x, y \in S_X\right\} .
$$

Papini and Baront\i \cite{6} introduces the constant in any space $X$,
$$
J^B(X)=\sup \left\{\min \{\|x+y\|,\|x-y\|\}: x, y \in S_X, x \perp_B y\right\}.
$$
	
	Ji  and Wu  introduced the constants $D(X)$ and $D^{\prime}(X)$ in \cite{28} to measure the distinction between Birkhoff orthogonality and isosceles orthogonality.

	$$
	\begin{aligned}
	D(X) & =\inf \left\{\inf _{\lambda \in \mathbb{R}}\|x+\lambda y\|: x, y \in S_X, x \perp_I y\right\} \\
	D^{\prime}(X) & =\sup \left\{\|x+y\|-\|x-y\|: x, y \in S_X, x \perp_B y\right\} .
\end{aligned}
$$

Papini and Wu \cite{7} introduced a new constant in Banach space $X$,
$$\begin{aligned}
	BR(X)&=\sup_{t>0}\bigg\{\frac{\|x+t y\|-\|x-t y\|}{t}:x, y \in S_X, x \perp_B y\bigg\}\\
	&=\sup_{t>0}\bigg\{\|t x+ y\|-\|t x- y\|:x, y \in S_X, x \perp_B y\bigg\}.
\end{aligned}$$

Moreover, the various properties of these constants are given in:

\rm{(1)} $0 \leq BR(X) \leq 1$; 

\rm{(2)} $1 \leq J^B(X) \leq 2$;

\rm{(3)} $BR(X)=0$ if and only if $X$ is an i.p.s.;

In \cite{8}, Fitzpatrick and Reznick proposed the skewness $s(X)$ of the space $X$, a parameter that characterizes the asymmetry of the norm:
$$
s(X)=\sup\left\{\lim_{\lambda\to 0^+}\frac{\|x+\lambda y\|-\|y+\lambda x\|}{\lambda}:x,y\in S_X\right\}.
$$

The constant $s(X)$ is bounded between 0 and 2 for all Banach spaces $X$. Additionally, prior studies have demonstrated the equivalence relations: $s(X)=0$
corresponds exactly to $X$ being a Hilbert space, and $s(X)=2$ is equivalent to $X$ not being uniformly nonsquare. 

	Suppose $X $ is a Banach space, for every  $\varepsilon \in[0,2] $, we define the modulus of convexity of the norm $\|\cdot\|$
	to be \rm{\cite{322}}
	$$
	\delta_{X}(\varepsilon)=\inf \left\{1-\left\|\frac{x+y}{2}\right\|: x, y \in B_{X},\|x-y\| \geq \varepsilon\right\} .
	$$
	
	For every $ \varepsilon \in(0,2]$, we say that the norm $ \|\cdot\|$ is uniformly
	convex if $\delta_{X}(\varepsilon)>0 $ , and $(X,\|\cdot\|)$ is
	called a uniformly convex space.

The motivation of this paper is to generalize existing geometric constants related to Birkhoff orthogonality by introducing the parameter $t>0$, and construct two new geometric constants $A^B_{t}(X)$ and $D_{t}^{B}(X)$.It inherits the core of asymmetry characterization from skew-type constants, and unifies the generalization logic of such constants through the parameter $t$, enabling the constants to adapt to more vector combination scenarios. 

On one hand, the flexibility of the parameter $t$ is utilized to cover vector combination scenarios with different weights, and normalization ensures that the constants are intrinsic properties of the space. On the other hand, we establish their equivalent characterization of uniform nonsquareness and quantitative relationship with the modulus of convexity. This enriches the library of quantitative tools for the geometric properties of Banach spaces and improves the characterization link from orthogonality to the core structure of the space.

The organization of this article is outlined below:

In Section 2, we introduce a new constant $A^B_{t}(X)$, and in the meantime, we investigate several of its fundamental properties, such as its range and comparisons with the classic constant $J^B(X)$. In addition, we gave examples in specific spaces and reported findings concerning uniform non-squareness and its relationship with the skewness $s(X)$.

In Section 3, we consider the $D_{t}^{B}(X)$, a new geometric constant. Then we present several basic properties of this constant, including its upper and lower bounds as well as its connection to the uniform non-squareness of Banach spaces. We also establish the relation between $D_{t}^{B}(X)$ and the modulus of convexity.

In Section 4, we explore the two constants the relationships between $A_{2}(X,B)$, and characterize the inner product space. In particular, By means of the geometric parameter $A^B_{t}(X)$, we obtain a characterization of Radon planes whose unit spheres is a affine-regular hexagon.
\section{The constant $A^B_{t}(X)$}
We define the constant $A^B_{t}(X)$ by extending and generalizing the form of the existing Birkhoff orthogonality-related geometric constant $A_{2}(X,B)$ by introducing a parameter $t>0$. We suppose that $X$ is a Banach space with
dimension at least 2.
 Next we start by outlining the key definitions: for fixed $t>0$, 
$$
\begin{aligned}
	&  A^B_{t}(X)=\sup \left\{\frac{\| x+ty\|+\|tx-y\|}{2}: x, y \in S_X, x \perp_B y\right\}.
\end{aligned}
$$

Specially, let $t=1$, then we have

$$
\begin{aligned}
	A^B_{1}(X) =\sup \left\{\frac{\| x+y\|+\|x-y\|}{2}: x, y \in S_X, x \perp_B y\right\}=A_{2}(X,B).
\end{aligned}
$$

To begin with, we use a lemma to derive the bounds of $A^B_{t}(X)$.

\begin{lemma}\cite{32}\label{11}
(1) The function $f(t)=\|x+t y\|+\|t x-y\|$ is a convex function of $t$ on $\mathbb{R}$.\\
	(2) The function $g(t)=\|t x+y\|+\|x-t y\|$ is a convex function of $t$ on $\mathbb{R}$.
\end{lemma}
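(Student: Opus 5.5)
The plan is to show that each summand is a convex function of $t$, and then use the fact that a sum of convex functions is convex. The only tool needed is the elementary fact that a norm composed with an affine map is convex: if $u,v\in X$ are fixed, then $h(t)=\|u+tv\|$ is convex on $\mathbb{R}$.

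First I would verify this fact directly. For $t_1,t_2\in\mathbb{R}$ and $\lambda\in[0,1]$ we have
$$
u+(\lambda t_1+(1-\lambda)t_2)v=\lambda(u+t_1v)+(1-\lambda)(u+t_2v).
$$
The triangle inequality and absolute homogeneity of the norm then give
$$
h(\lambda t_1+(1-\lambda)t_2)\le \lambda h(t_1)+(1-\lambda)h(t_2).
$$

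Next, for part (1), I write $f(t)=\|x+ty\|+\|tx-y\|$. The first term is $h$ with $u=x$ and $v=y$. The second term is $\|-y+tx\|$, which is $h$ with $u=-y$ and $v=x$. Both are convex, and a sum of convex functions is convex, so $f$ is convex. For part (2), the same argument applies to $g(t)=\|tx+y\|+\|x-ty\|$. Here we take $u=y,\ v=x$ for the first term and $u=x,\ v=-y$ for the second.

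There is no real obstacle; the only point needing care is identifying each term as a norm of an affine function of $t$. No orthogonality hypothesis on $x,y$ is needed, so the lemma holds for arbitrary $x,y\in X$.
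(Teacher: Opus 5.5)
Your proof is correct and complete: each summand is the norm of an affine function of $t$, hence convex by the triangle inequality and homogeneity, and a sum of convex functions is convex. The paper gives no proof of its own (it quotes the lemma from the literature), and your argument is the standard one that fills this in, valid for arbitrary $x,y\in X$ with no orthogonality assumption.
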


\begin{proposition}\label{P1}
	Suppose that $X$ is a Banach space, then $\sqrt2 \min\{1,t\}\leq A^B_{t}(X) \leq 1+t$.
\end{proposition}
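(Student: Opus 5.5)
The upper bound follows directly from the triangle inequality. For $x,y\in S_X$ with $x\perp_B y$,
\[
\|x+ty\|\le 1+t \quad\text{and}\quad \|tx-y\|\le t+1,
\]
so half their sum is at most $1+t$. Taking the supremum gives $A^B_t(X)\le 1+t$. Nonemptiness of the admissible set is guaranteed by the existence of Birkhoff orthogonal unit pairs, which holds in any space of dimension at least $2$ by Hahn--Banach: pick a norming functional for $x$ and a unit $y$ in its kernel.

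For the lower bound, I would first record two elementary consequences of $x\perp_B y$ with $x,y\in S_X$. Directly from the definition, $\|x+ty\|\ge\|x\|=1$. Also, $\|tx-y\|=t\|x-t^{-1}y\|\ge t$, again by Birkhoff orthogonality. So every admissible pair gives
\[
\frac{\|x+ty\|+\|tx-y\|}{2}\ge \frac{1+t}{2}\ge \min\{1,t\}.
\]
This is weaker than the claimed $\sqrt2\min\{1,t\}$, so more is needed.

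To reach $\sqrt2\min\{1,t\}$, I would reduce to the case $t=1$ and use the known bound $A_2(X,B)=A^B_1(X)\ge\sqrt2$ quoted in the introduction. The key step is a comparison inequality: for $x\perp_B y$ in $S_X$,
\[
\|x+ty\|+\|tx-y\|\ge \min\{1,t\}\bigl(\|x+y\|+\|x-y\|\bigr).
\]
For $t\le 1$, I would write $tx-y$ and $x+ty$ as combinations and use convexity. Alternatively, and more cleanly, set $f(s)=\|x+sy\|+\|sx-y\|$, which is convex in $s$ by Lemma~\ref{11}(1). Its values satisfy $f(0)=\|x\|+\|y\|=2$ and $f(1)=\|x+y\|+\|x-y\|$. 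For $t\in[0,1]$, convexity gives only an upper bound on $f(t)$, so convexity alone is insufficient. Instead I would use the homogeneity trick. For $t\le1$,
\[
\|x+ty\|\ge t\|x+y\|-(1-t)\cdot 0\ ?
\]
This does not hold directly, so better is: $x+ty = t(x+y)+(1-t)x$, and the reverse triangle inequality is not monotone. So I would instead compare against the sequence defining $A_2(X,B)\ge\sqrt2$.

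Concretely, I would redo the known proof of $A_2(X,B)\ge\sqrt2$. That proof takes $x\perp_B y$ together with $y\perp_B x$, which is possible by the existence of mutually Birkhoff-orthogonal unit pairs in two-dimensional subspaces (a result of Day/James via a continuity argument). Mutual orthogonality gives both $\|x+ty\|\ge1$ and $\|x+ty\|=\|ty+x\|\ge t$, that is, $\|x+ty\|\ge\max\{1,t\}$. The same holds for $\|tx-y\|\ge\max\{1,t\}$. This yields
\[
A^B_t(X)\ge\max\{1,t\}\ge\min\{1,t\},
\]
but still without the $\sqrt2$ factor. To gain $\sqrt2$, I would choose the mutually orthogonal pair to extremize: among such pairs, either $\|x+y\|$ or $\|x-y\|$ is at least $\sqrt2$ in a suitable sense. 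Then I would use the scaling $\|x+ty\|\ge\min\{1,t\}\|x+y\|$, valid when $x\perp_B y$ and $y\perp_B x$, because $s\mapsto\|x+sy\|/\max\{1,s\}$ is monotone under mutual orthogonality, together with the analogous bound for $\|tx-y\|$.

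The main obstacle is this scaling step, showing $\|x+ty\|\ge\min\{1,t\}\|x+y\|$. For $t\le1$, I would write
\[
t(x+y)=(x+ty)-(1-t)x
\]
and bound $\|t(x+y)\|\le\|x+ty\|+(1-t)$. This costs an additive term, so I would combine it with $\|x+ty\|\ge1$ and the concavity of the resulting estimate to recover the factor $\sqrt2\,t$, falling back on Lemma~\ref{11} to handle intermediate values of $t$.
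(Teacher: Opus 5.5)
Your upper bound is the paper's. So is your reduction of the lower bound to the pairwise comparison $\|x+ty\|+\|tx-y\|\ge\min\{1,t\}\bigl(\|x+y\|+\|x-y\|\bigr)$ followed by $A_2(X,B)\ge\sqrt2$. The gap is that you never prove this comparison for $t<1$.

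You correctly note that convexity of $f(s)=\|x+sy\|+\|sx-y\|$ only bounds $f(t)$ from above on $[0,1]$. But you then drop the homogeneity idea. What you fall back on, $\|t(x+y)\|\le\|x+ty\|+(1-t)$ together with $\|x+ty\|\ge1$, gives only $\|x+ty\|\ge\max\{1,\,t\|x+y\|-(1-t)\}$, and the analogue for $\|tx-y\|$. This additive loss cannot be recovered. In a Hilbert space at $t=0.9$, these estimates give about $2.35$ for the sum, below the required $2\sqrt2\,t\approx2.55$.

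The missing step is the rescaling $f(t)=t\,g(1/t)$ with $g(s)=\|sx+y\|+\|x-sy\|$, which is convex by Lemma~\ref{11}(2). Since $g(0)=2\le\|x+y\|+\|x-y\|=g(1)$, convexity forces $g$ to be nondecreasing on $[1,\infty)$, so $g(1/t)\ge g(1)$. For $t\ge1$, the same reasoning applied to $f$ itself (Lemma~\ref{11}(1), with $f(0)=2\le f(1)$) gives $f(t)\ge f(1)$. You never state this case explicitly.

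The $\sqrt2$ then needs no rederivation: take suprema and use $A_2(X,B)\ge\sqrt2$ from the introduction. The paper's AM--GM line by itself only gives $\ge1$ pair by pair, so you were right that the $\sqrt2$ cannot be produced pointwise. Your detour through mutually orthogonal pairs does not supply it, for three reasons:
\begin{itemize}
\item The claim that one of $\|x\pm y\|$ is at least $\sqrt2$ is unproved. Even if true, it would only give an average of $(1+\sqrt2)/2<\sqrt2$.
\item The monotonicity you invoke is backwards. Under mutual orthogonality, $s\mapsto\|x+sy\|/\max\{1,s\}$ increases on $[0,1]$ and decreases on $[1,\infty)$, which yields the upper bound $\|x+ty\|\le\max\{1,t\}\|x+y\|$.
\item The lower bound you actually want, $\|x+ty\|\ge\min\{1,t\}\|x+y\|$, concerns $\|x+sy\|/\min\{1,s\}$. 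It follows from the same one-variable convexity using only $x\perp_B y$, which gives $\|x+y\|\ge1$; mutual orthogonality is not needed.
\end{itemize}
Your bound $A^B_t(X)\ge\max\{1,t\}$ from mutually orthogonal pairs is correct. It already yields $\sqrt2\min\{1,t\}$ when $t\le1/\sqrt2$ or $t\ge\sqrt2$, but not for $1/\sqrt2<t<\sqrt2$.
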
 
\begin{proof}  First of all, the function $f: \mathbb{R} \rightarrow \mathbb{R}$ defined as	
	$$
	f(t)=\|x+t y\|+\|t x-y\| .
	$$
	For any $x, y \in S_X$, we obtain that
	$$
	f(0)=\|x\|+\|y\|=2
	$$
	and
	$$
	f(1)=f(-1)=\|x+y\|+\|x-y\| \geqslant 2 .
	$$
	By the Lemma \ref{11}, we know that $f(t)$ is a convex function, so $f\left(t\right) \geqslant f(1)$ for $t \geqslant 1$ and using the same technique, $g\left(\frac{1}{t}\right) \geqslant g(1)$ for $\frac{1}{t} \geqslant 1$.
	
	When $t \geqslant 1$, for any $x, y \in S_X$, we have the right inequality hold,
	$$\begin{aligned}
		\frac{\| x+t y\|+\|t x- y\|}{2}&\leq
		\frac{\| x\|+\|t y\|+\|t x\|+\| y\|}{2}
		\\&=1+t.
	\end{aligned}
	$$
	
	On the other hand,  for any \( x, y \in S_X \), $x \perp_B y$ and \( t \geqslant 1 \), we can obtain 
	$$\begin{aligned}
		\frac{\left\|x+t y\right\|+\left\|t x-y\right\|}{2} &\geqslant \frac{\|x+y\|+\|x-y\|}{2}\\&\geqslant \sqrt{\|x+y\|\|x-y\|},
	\end{aligned}$$
	thus, we have
	$$
	\frac{\| x+t y\|+\|tx-y\|}{2} \geqslant  \sqrt2 .
	$$
	Similarly, for any \( x, y \in S_X \) and \( t \in (0,1] \),  yielding the following inequality,
	$$\begin{aligned}
		\frac{t\left(\|\frac{1}{t}x+ y\|+\|x-\frac{1}{t}y\|\right)}{2t} &\geqslant\frac{\|x+y\|+\|x-y\|}{2}\\&\geqslant \sqrt{\|x+y\|\|x-y\|},
	\end{aligned}$$
from which it follows that
	$$
	\frac{\| x+t y\|+\|tx-y\|}{2} \geq \sqrt{2} t,
	$$ so we complete the proof.
\end{proof} 

Then we analyse the relation between
the constant  $A^B_{t}(X)$ and the famous constant $J^B(X)$.
\begin{proposition}\label{P2} Suppose that $X$ is a Banach space. Then 
	$$
	\max \{1, t\} J^B(X)-|1-t| \leqslant A^B_{t}(X) \leqslant \frac{1}{2} J^B(X)+1+|1-t| .
	$$
\end{proposition}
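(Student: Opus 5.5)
The plan is to compare $A^B_t(X)$ directly with the symmetric quantities $\|x+y\|$ and $\|x-y\|$ for a fixed pair $x,y\in S_X$ with $x\perp_B y$. I will use only the triangle inequality and $\|x\|=\|y\|=1$, and then pass to suprema over all such pairs. Since $J^B(X)$ and $A^B_t(X)$ are suprema over the same set of pairs, pointwise inequalities between the integrands transfer immediately.

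\emph{Lower bound.} Fix $x,y\in S_X$ with $x\perp_B y$. I split into two cases according to $t$.
\begin{itemize}
\item If $t\ge 1$, I rewrite $x+ty=t(x+y)-(t-1)x$ and $tx-y=t(x-y)+(t-1)y$. The triangle inequality then gives $\|x+ty\|\ge t\|x+y\|-(t-1)$ and $\|tx-y\|\ge t\|x-y\|-(t-1)$.
\item If $0<t<1$, I write $x+ty=(x+y)-(1-t)y$ and $tx-y=(x-y)-(1-t)x$. This gives $\|x+ty\|\ge \|x+y\|-(1-t)$ and $\|tx-y\|\ge\|x-y\|-(1-t)$.
\end{itemize}
In both cases, averaging the two inequalities yields
$$\frac{\|x+ty\|+\|tx-y\|}{2}\ge \max\{1,t\}\,\frac{\|x+y\|+\|x-y\|}{2}-|1-t|\ge \max\{1,t\}\min\{\|x+y\|,\|x-y\|\}-|1-t|.$$
Taking the supremum over all admissible pairs gives $A^B_t(X)\ge \max\{1,t\}J^B(X)-|1-t|$.

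\emph{Upper bound.} For the same pairs, I write $x+ty=(x+y)+(t-1)y$ and $tx-y=(x-y)+(t-1)x$. This gives $\|x+ty\|\le \|x+y\|+|1-t|$ and $\|tx-y\|\le\|x-y\|+|1-t|$, so
$$\frac{\|x+ty\|+\|tx-y\|}{2}\le \frac{\|x+y\|+\|x-y\|}{2}+|1-t|.$$
Next I use the elementary identity $a+b=\min\{a,b\}+\max\{a,b\}$ with $a=\|x+y\|$ and $b=\|x-y\|$. Both are at most $2$ because $x,y\in S_X$, so $\frac{a+b}{2}\le \frac12\min\{a,b\}+1$. Hence the average is at most $\frac12\min\{\|x+y\|,\|x-y\|\}+1+|1-t|\le \frac12 J^B(X)+1+|1-t|$. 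Taking the supremum over the pairs finishes the proof.

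The only delicate point is choosing the right decomposition in the lower bound. In the case $t\ge1$, the factor $t$ must be pulled out in front of $x\pm y$ rather than simply bounding by $\|x\pm y\|$; otherwise one obtains only $J^B(X)-|1-t|$ instead of $tJ^B(X)-|1-t|$. The rest is routine. Birkhoff orthogonality is used only to ensure that both constants are taken over the same set of pairs.
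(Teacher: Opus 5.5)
Your proof is correct and is essentially the paper's argument. The lower bound uses the same triangle-inequality decompositions, e.g.\ $t(x+y)=(x+ty)+(t-1)x$ and $t(x-y)=(tx-y)+(1-t)y$ to produce the factor $\max\{1,t\}$, and the upper bound uses the same comparison $\|x+ty\|\le\|x+y\|+|1-t|$, $\|tx-y\|\le\|x-y\|+|1-t|$, with your use of $\max\{\|x+y\|,\|x-y\|\}\le 2$ spelling out the step $\frac{\|x+y\|+\|x-y\|}{2}\le\frac12 J^B(X)+1$ that the paper leaves implicit.
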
 

\begin{proof} For $x, y \in S_X$  such that $
	x \perp_B y
	$ and $t>0$, we have
	
	$$
	\begin{aligned}
		\min \{\|x+y\|,\|x-y\|\} 
		& \leqslant \min \{\| x+t y\|+|1-t|,\|t x- y\|+|1-t|\} \\
		& \leqslant \frac{\| x+t y\|+\|t x- y\|}{2}+|1-t|,
	\end{aligned}
	$$
	and
	$$
	\begin{aligned}
		t \min \{\|x+y\|,\|x-y\|\} &  \leqslant \min \{\| x+t y\|+|1-t|,\|t x- y\|+|1-t|\} \\
		& \leqslant \frac{\| x+t y\|+\|t x-y\|}{2}+|1-t| .
	\end{aligned}
	$$
	So we can obtain $\max \{1, t\} J^B(X)-|1-t| \leqslant A^{B}_{t}(X)$.
	
	On the other hand,  
	for the above of $x, y $, 
	we have
	$$
	\begin{aligned}
		\frac{\| x+t y\|+\|t x- y\|}{2} & \leqslant \frac{\|x+y\|+\|x-y\|}{2}+|1-1|+|1-t| \\
		& \leq\frac{J^B(X)}{2}+1+|1-t|,
	\end{aligned}
	$$
	as desired.
\end{proof} 
Focusing on the relevant properties of the constant $A_2(X,B)$, next, we will investigate the connection between $A^B_{t}(X)$  and $ A_2(X,B)$, we first introduce a Lemma.
\begin{lemma}\label{2.2}\cite{34}
For nonzero vectors $x$ and $y$ in a normed space $X$ it is true that

$$
\|x+y\| \leq\|x\|+\|y\|-\left(2-\left\|\frac{x}{\|x\|}+\frac{y}{\|y\|}\right\|\right) \min (\|x\|,\|y\|).
$$
\end{lemma}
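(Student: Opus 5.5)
This is the Maligranda-type inequality, and I will prove it by normalizing the sum so that the bigger summand is cut down to the length of the smaller one. By symmetry of the statement in $x$ and $y$, assume $\|x\|\le\|y\|$, so $\min(\|x\|,\|y\|)=\|x\|$. Write $u=x/\|x\|$ and $v=y/\|y\|$, both in $S_X$. The inequality to prove is then
\[
\|x+y\|\le \|x\|+\|y\|-2\|x\|+\|x\|\,\|u+v\| = \|y\|-\|x\|+\|x\|\,\|u+v\|.
\]

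The key step is to decompose $x+y$ as
\[
x+y=\|x\|\,u+\|y\|\,v=\|x\|(u+v)+(\|y\|-\|x\|)\,v .
\]
Applying the triangle inequality to this decomposition gives
\[
\|x+y\|\le \|x\|\,\|u+v\|+(\|y\|-\|x\|)\,\|v\|=\|x\|\,\|u+v\|+\|y\|-\|x\|.
\]
This uses $\|v\|=1$ and $\|y\|-\|x\|\ge 0$, which is where the normalization $\|x\|\le\|y\|$ enters. The result is exactly the rearranged target.

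The case $\|y\|\le\|x\|$ follows by interchanging the roles of $x$ and $y$, since both sides of the statement are symmetric in $x$ and $y$.

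The only point needing care is the choice of decomposition: the coefficient $\|x\|$ on $u+v$ must be the smaller norm. Only then is the leftover coefficient $\|y\|-\|x\|$ nonnegative, so that the triangle inequality yields no absolute values and reproduces the factor $\min(\|x\|,\|y\|)$. Beyond this, the argument is routine.
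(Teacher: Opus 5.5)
Your proof is correct and complete. With $\|x\|\le\|y\|$, the decomposition $x+y=\|x\|(u+v)+(\|y\|-\|x\|)v$ and one application of the triangle inequality give exactly the rearranged bound, and the symmetry of both sides in $x,y$ handles the other case. The paper gives no proof of this lemma and only quotes it from the literature; your argument is the standard proof of this Maligranda-type inequality, so there is nothing further to compare.
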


\begin{proposition}\label{P2} Suppose that $X$ is a Banach space. Then 
	$$
	\min \{1, t\} A_2(X,B) \leqslant A^B_{t}(X) \leqslant  1+t-(2-A_2(X,B))\min\{1,t\}.
	$$
\end{proposition}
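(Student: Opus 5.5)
The plan is to work pointwise with a fixed pair $x,y\in S_X$ with $x\perp_B y$ and to pass to suprema at the end. Throughout, let $f(s)=\|x+sy\|+\|sx-y\|$ as in Lemma \ref{11}, so that $f(1)=\|x+y\|+\|x-y\|$.

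\textbf{Lower bound.} I follow the convexity argument of Proposition \ref{P1}.
\begin{itemize}
\item For $t\ge 1$: $f$ is convex with $f(0)=2\le f(1)$. Its right difference quotient on $[0,1]$ is therefore nonnegative, so $f$ is nondecreasing on $[1,\infty)$. Hence $\frac{\|x+ty\|+\|tx-y\|}{2}\ge \frac{\|x+y\|+\|x-y\|}{2}$.
\item For $t\in(0,1]$: write
\[
\|x+ty\|+\|tx-y\| = t\Bigl(\bigl\|\tfrac1t x+y\bigr\|+\bigl\|x-\tfrac1t y\bigr\|\Bigr)=t\,g(1/t).
\]
The same monotonicity, applied to the convex function $g$ of Lemma \ref{11} (again $g(0)=2\le g(1)$), gives $g(1/t)\ge g(1)=\|x+y\|+\|x-y\|$.
\end{itemize}
In both cases $\frac{\|x+ty\|+\|tx-y\|}{2}\ge \min\{1,t\}\frac{\|x+y\|+\|x-y\|}{2}$. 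Taking the supremum over Birkhoff orthogonal unit pairs yields $\min\{1,t\}A_2(X,B)\le A^B_t(X)$.

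\textbf{Upper bound.} I apply Lemma \ref{2.2} separately to each of the two terms.
\begin{itemize}
\item For $\|x+ty\|$, take the vectors $x$ and $ty$. Since $\|x\|=1$, $\|ty\|=t$ and the normalized vectors are $x$ and $y$, this gives $\|x+ty\|\le 1+t-(2-\|x+y\|)\min\{1,t\}$.
\item For $\|tx-y\|$, take the vectors $tx$ and $-y$, whose normalized vectors are $x$ and $-y$. This gives $\|tx-y\|\le 1+t-(2-\|x-y\|)\min\{1,t\}$.
\end{itemize}
Adding the two estimates and halving,
\[
\frac{\|x+ty\|+\|tx-y\|}{2}\le 1+t-\Bigl(2-\frac{\|x+y\|+\|x-y\|}{2}\Bigr)\min\{1,t\}.
\]
The bracket satisfies $2-\frac{\|x+y\|+\|x-y\|}{2}\ge 2-A_2(X,B)$, and the right-hand side is increasing in $\frac{\|x+y\|+\|x-y\|}{2}$. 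Taking suprema therefore gives the stated upper bound.

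\textbf{Main obstacle.} The only delicate point is the monotonicity step in the lower bound. It needs $f(1)\ge f(0)$, which holds because $\|x+y\|+\|x-y\|\ge\|2x\|=2$; convexity then forces $f$ to be nondecreasing beyond $1$. The same fact is needed for $g$. This is exactly the argument already used in Proposition \ref{P1}, so I expect no real difficulty beyond checking the rescaling identity for $t<1$ carefully. The Birkhoff orthogonality is used only to stay inside the admissible set of $A_2(X,B)$.
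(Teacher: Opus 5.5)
Your proposal is correct and follows the paper's proof essentially step for step. For the lower bound you use the convexity of $f$ and $g$ from Lemma \ref{11}, together with the rescaling $\|x+ty\|+\|tx-y\|=t\,g(1/t)$ when $t\le 1$. For the upper bound you apply Lemma \ref{2.2} to the pairs $(x,ty)$ and $(tx,-y)$ and then bound $\frac{\|x+y\|+\|x-y\|}{2}$ by $A_2(X,B)$. You also spell out the slope-comparison justification of the monotonicity step (using $f(1)\ge f(0)=2$), which the paper only invokes "by analogy" with Proposition \ref{P1}.
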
 

\begin{proof} 
	
		By the method analogous to that for Proposition \ref{P1}, we have the following conclusion.
	When $t \geqslant 1$, for any $x, y \in S_X$, we have
	$$
	\frac{\left(\left\|x+t y\right\|+\left\|t x-y\right\|\right)}{2} \geqslant \frac{\|x+y\|+\|x-y\|}{2} .
	$$
	So we have 
	$$
	A^B_{t}(X)\geqslant A_2(X,B) .
	$$
	
	Similarly, $0<t \leqslant 1$, for any $x, y \in S_X$, we have
	
	$$
	\frac{t\left(\left\|\frac{1}{t} x+y\right\|+\left\|x-\frac{1}{t} y\right\|\right)}{2 t} \geqslant \frac{\|x+y\|+\|x-y\|}{2} ,
	$$
	which implies that
	$
	\frac{A^B_{t}(X)}{t} \geqslant A_2(X,B)
	$.

	On the other hand, by the Lemma \ref{2.2}, we have 
	$$\| x+t y\|\leq \|x\|+\|ty\|-\left(2-\left\|\frac{x}{\|x\|}+\frac{ty}{\|ty\|}\right\|\right) \min (\|x\|,\|ty\|)$$
	$$\|t x- y\|\leq\|tx\|+\|y\|-\left(2-\left\|\frac{tx}{\|tx\|}-\frac{y}{\|y\|}\right\|\right) \min (\|tx\|,\|y\|).
$$	
Since $x, y \in S_X$, we can obtain
	$$
	\begin{aligned}
		\frac{\| x+t y\|+\|t x- y\|}{2} & \leqslant \frac{2(1+t)-\left(2-\|x+y\|\right) \min \{1,t\}-\left(2-\|x-y\|\right) \min\{1,t\}}{2} \\
		& =1+t-\Big(2-\frac{\|x+y\|+\|x-y\|}{2}\Big) \min\{1,t\}\\&\leq 1+t-(2-A_2(X,B))\min\{1,t\},
	\end{aligned}
	$$
	as desired.
\end{proof} 
The utility of a new geometric constant is demonstrated by its ability to provide new insights
or criteria for important geometric properties. Here, in Banach spaces, we discusses $A^B_{t}(X)$ uniform non-squareness.

Note that a Banach space $X$ is not uniformly non-square if and only if
$A_2(X,B)=2$ \cite{5}, by the above Proposition \ref{P2}, we deduce the
following Proposition.

\begin{proposition}\label{2.3}
	Let $X$ be a Banach space. Then $A^B_{t}(X)=1+t$ if and only if $X$ is not uniformly non-square.
\end{proposition}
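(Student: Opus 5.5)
The plan is to obtain both directions from the two-sided estimate of Proposition \ref{P2} (the one comparing $A^B_{t}(X)$ with $A_2(X,B)$), from Proposition \ref{P1}, and from the characterization in \cite{5} that $X$ is not uniformly non-square if and only if $A_2(X,B)=2$. The upper bound of Proposition \ref{P2} handles necessity directly. The lower bound $\min\{1,t\}A_2(X,B)$ is too weak for sufficiency when $t\neq 1$, so for that direction I will work with an extremal sequence.

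\emph{Necessity.} Suppose $A^B_{t}(X)=1+t$. The upper estimate of Proposition \ref{P2} gives
$$1+t\leq 1+t-(2-A_2(X,B))\min\{1,t\}.$$
Since $t>0$, this forces $(2-A_2(X,B))\min\{1,t\}\leq 0$. Hence $A_2(X,B)\geq 2$, and so $A_2(X,B)=2$, because $A_2(X,B)\leq 2$ by the properties recalled in the introduction. By \cite{5}, $X$ is then not uniformly non-square.

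\emph{Sufficiency.} Suppose $X$ is not uniformly non-square, so $A_2(X,B)=2$. Then there are $x_n,y_n\in S_X$ with $x_n\perp_B y_n$ and $\|x_n+y_n\|+\|x_n-y_n\|\to 4$. Each summand is at most $2$, so $\|x_n+y_n\|\to 2$ and $\|x_n-y_n\|\to 2$.

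For $t\geq 1$, I will rewrite $x_n+ty_n=t(x_n+y_n)-(t-1)x_n$. The triangle inequality then gives
$$\|x_n+ty_n\|\geq t\|x_n+y_n\|-(t-1)\to 1+t.$$
In the same way, $tx_n-y_n=t(x_n-y_n)+(t-1)y_n$, so $\|tx_n-y_n\|\geq t\|x_n-y_n\|-(t-1)\to 1+t$.

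For $0<t<1$, I will write $x_n+ty_n=(x_n+y_n)-(1-t)y_n$ and $tx_n-y_n=(x_n-y_n)-(1-t)x_n$. This gives both norms at least $\|x_n\pm y_n\|-(1-t)\to 1+t$.

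In either case, $\frac{\|x_n+ty_n\|+\|tx_n-y_n\|}{2}$ has $\liminf$ at least $1+t$, so $A^B_{t}(X)\geq 1+t$. Combined with the upper bound $A^B_{t}(X)\leq 1+t$ of Proposition \ref{P1}, this yields equality.

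The only delicate point is the sufficiency direction. There one must not rely on the lower estimate of Proposition \ref{P2}, which only gives $2\min\{1,t\}$. The fix is to use the near-extremal Birkhoff-orthogonal pairs directly, splitting $x+ty$ and $tx-y$ around $x\pm y$ as above, with the split chosen according to whether $t\geq 1$ or $t<1$.
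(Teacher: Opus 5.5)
Your proof is correct, and your necessity direction is the paper's argument: the upper estimate $A^B_{t}(X)\le 1+t-(2-A_2(X,B))\min\{1,t\}$ forces $A_2(X,B)=2$, and the characterization from \cite{5} finishes.

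The difference is in the converse. The paper's proof stops after deriving $A_2(X,B)\ge 2$ and writes ``as desired''. It never shows that a space which is not uniformly non-square has $A^B_{t}(X)=1+t$. As you point out, the lower estimate $\min\{1,t\}A_2(X,B)$ gives only $2\min\{1,t\}<1+t$ for $t\neq 1$, so that direction needs an extra argument. Your near-extremal sequence supplies it. All four decompositions check out, e.g.\ $x_n+ty_n=t(x_n+y_n)-(t-1)x_n$ for $t\ge 1$ and $tx_n-y_n=(x_n-y_n)-(1-t)x_n$ for $t<1$.

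The paper already contains an equivalent tool. Its comparison with $J^B(X)$ gives $\max\{1,t\}J^B(X)-|1-t|\le A^B_{t}(X)$. Since $J^B(X)=2$ when $X$ is not uniformly non-square, this yields $A^B_{t}(X)\ge 2\max\{1,t\}-|1-t|=1+t$ at once. That bound is proved by the same triangle-inequality splitting you use; your pairs with $\|x_n\pm y_n\|\to 2$ are witnesses of $J^B(X)=2$. So the two routes are the same mechanism. Citing the $J^B$ proposition would shorten your write-up, while your version is self-contained and makes explicit the step the paper omits.
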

\begin{proof} 
	By  Proposition \ref{P2}, 
	when  $A^B_{t}(X)=1+t$ we deduce that
	$$
	1+t\leqslant 1+t-(2-A_2(X,B))\min\{1,t\} .
	$$
	This shows that $A_2(X,B)\geq2$. It should be noted that the upper bound of the constant is 2, and combined with the conclusion that  $A_2(X,B)=2$ if and only if the space \( X \) is not uniformly \cite{5}, as desired.
\end{proof}

In what follows, we put forward the proposition below to demonstrate that the value $1+t$ in the preceding implication cannot be substituted with a lesser number.

\begin{proposition}
	For any $\varepsilon>0$, $t>0$, there exist a uniformly convex and also uniformly smooth Banach spaces $X$ such that 
	$A^B_{t}(X)\geq1+t-\varepsilon.$
\end{proposition}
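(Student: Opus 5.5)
Take $X=\ell_p$ (or simply $\ell_p^2$) with $p>1$ close to $1$. For $1<p<\infty$ these spaces are uniformly convex and uniformly smooth, so the whole task reduces to exhibiting, for each such $p$, unit vectors $x\perp_B y$ for which $\frac{\|x+ty\|_p+\|tx-y\|_p}{2}$ tends to $1+t$ as $p\to1^+$. Then, for given $\varepsilon>0$ and $t>0$, we fix $p$ close enough to $1$.

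\textbf{Step 1: choice of vectors.} Take $x=e_1=(1,0)$ and $y=e_2=(0,1)$ in $\ell_p^2$. These are unit vectors, and
\[
\|x+\lambda y\|_p=(1+|\lambda|^p)^{1/p}\ge 1=\|x\|_p\quad\text{for all }\lambda\in\mathbb R,
\]
so $x\perp_B y$.

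\textbf{Step 2: the quotient.} Both $\|x+ty\|_p$ and $\|tx-y\|_p$ equal $(1+t^p)^{1/p}$. Hence
\[
A^B_t(\ell_p^2)\ \ge\ (1+t^p)^{1/p}.
\]

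\textbf{Step 3: the limit.} As $p\to1^+$ we have $(1+t^p)^{1/p}\to 1+t$, since $(p,t)\mapsto (1+t^p)^{1/p}$ is continuous in $p$ for fixed $t>0$. Choose $p_0>1$ with $(1+t^{p_0})^{1/p_0}\ge 1+t-\varepsilon$ and set $X=\ell_{p_0}^2$, or $X=\ell_{p_0}$ if an infinite-dimensional example is wanted. Restricting to the first two coordinates gives the same vectors and the same Birkhoff orthogonality there, so the bound holds in either case.

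The only step needing any care is the justification that $\ell_{p}$ with $1<p<\infty$ is uniformly convex and uniformly smooth. This follows from Clarkson's inequalities together with duality, since $\ell_p^*=\ell_{q}$ and a space is uniformly smooth exactly when its dual is uniformly convex. It is standard, and we would cite it rather than reprove it. Everything else is the elementary computation above; there is no real obstacle.

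Optionally, one can note that this is consistent with Proposition \ref{2.3}. Each $\ell_p$ with $1<p<\infty$ is uniformly non-square, so the supremum $1+t$ is not attained, yet it is approached arbitrarily closely. This shows that the value $1+t$ in Proposition \ref{2.3} cannot be replaced by any smaller number.
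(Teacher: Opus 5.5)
Your proof is correct and follows the same strategy as the paper: take an explicit Birkhoff-orthogonal pair of unit vectors in $\ell_p^2$ and pass to a limit in $p$. The only difference is the direction of the limit. The paper sends $p\to\infty$ using the diagonal vectors $x=2^{-1/p}(1,1)$ and $y=2^{-1/p}(-1,1)$, which gives $2^{-1/p}(|1-t|^p+|1+t|^p)^{1/p}\to 1+t$. You send $p\to 1^+$ using $e_1,e_2$, which makes the Birkhoff orthogonality check immediate.
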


\begin{proof} Let $\varepsilon>0$. First, observe that 
	$$
	\lim_{p \to \infty} 2^{-\frac{1}{p}}(|1-t|^p+|1+t|^p)^{1/p}=1+t.
	$$
	Thus, there exists some $p>1$ 
	for which
	 $$2^{-\frac{1}{p}}(|1-t|^p+|1+t|^p)^{1/p}\geq1+t-\varepsilon.$$
	 Next, take the uniformly convex and also uniformly smooth space $X=\mathbb{R}^2$
	 equipped with the  norm
	$$
\left\|\left(x_1, x_2\right)\right\|=\left(\left|x_1\right|^p+\left|x_2\right|^p\right)^{1 / p},\quad 1<p<\infty.
	$$ 
	
	 Suppose that $x=\left(2^{-\frac{1}{p}}, 2^{-\frac{1}{p}}\right)$ and $y=\left(-2^{-\frac{1}{p}}, 2^{-\frac{1}{p}}\right)$, then we can see that $x, y\in S_X$ such that $x \perp_B y$. Thus, from the definition of $A^B_{t}(X)$ that we have
	$$
	\begin{aligned}
		A^B_{t}(X)&\geq\frac{\|x+ty\|+\|tx- y\|}{2}\\&=2^{-\frac{1}{p}}\cdot\frac{(|1-t|^p + |1+t|^p)^{1/p} + (|t+1|^p +|t-1|^p)^{1/p}}{2}\\&=2^{-\frac{1}{p}}(|1-t|^p+|1+t|^p)^{1/p}\\&\geq1+t-\varepsilon.
	\end{aligned}
	$$
	This concludes the proof.
\end{proof}

\begin{lemma}\label{1}
	\rm{(\cite{21})} Suppose that $X$ is a Banach space, $x\in X$ and $x\neq0$. Then, for every $y\in X$, we have the function
	
	$$
	g(\lambda)=\frac{\|x + \lambda y\|-\|x\|}{\lambda},
	$$
	whose domain is $\mathbb{R}\setminus\{0\} $
	and range is contained in $\mathbb R$
	is non-decreasing on its entire domain.
\end{lemma}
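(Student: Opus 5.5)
The statement is Lemma~\ref{1}: for fixed $x\neq 0$ and $y$, the difference quotient $g(\lambda)=\frac{\|x+\lambda y\|-\|x\|}{\lambda}$ is non-decreasing on $\mathbb{R}\setminus\{0\}$. The plan is to deduce this from the convexity of the scalar function $\varphi(\lambda)=\|x+\lambda y\|$, combined with the standard three-slope property of convex functions, since $g(\lambda)$ is exactly the slope of the chord of $\varphi$ between $0$ and $\lambda$.

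\textbf{Convexity of $\varphi$.} For $\lambda,\mu\in\mathbb{R}$ and $s\in[0,1]$, write
$$x+(s\lambda+(1-s)\mu)y=s(x+\lambda y)+(1-s)(x+\mu y).$$
The triangle inequality and homogeneity of the norm then give $\varphi(s\lambda+(1-s)\mu)\le s\varphi(\lambda)+(1-s)\varphi(\mu)$. Note that $\varphi(0)=\|x\|$, so
$$g(\lambda)=\frac{\varphi(\lambda)-\varphi(0)}{\lambda-0}.$$

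\textbf{Monotonicity of the chord slopes.} For a convex $\varphi$ and a fixed base point $0$, the slope $\lambda\mapsto \frac{\varphi(\lambda)-\varphi(0)}{\lambda}$ is non-decreasing on $\mathbb{R}\setminus\{0\}$. I would verify this in three cases.

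\emph{Case $0<\lambda_1<\lambda_2$.} Write $\lambda_1=s\lambda_2+(1-s)\cdot 0$ with $s=\lambda_1/\lambda_2\in(0,1)$. Convexity gives $\varphi(\lambda_1)-\varphi(0)\le \frac{\lambda_1}{\lambda_2}\bigl(\varphi(\lambda_2)-\varphi(0)\bigr)$. Dividing by $\lambda_1>0$ yields $g(\lambda_1)\le g(\lambda_2)$.

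\emph{Case $\lambda_1<\lambda_2<0$.} The same argument applies with $s=\lambda_2/\lambda_1\in(0,1)$, writing $\lambda_2$ as a convex combination of $\lambda_1$ and $0$. Here the division is by a negative number, which flips the inequality to the correct direction.

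\emph{Case $\lambda_1<0<\lambda_2$.} Write $0$ as a convex combination of $\lambda_1$ and $\lambda_2$, namely $0=\frac{\lambda_2}{\lambda_2-\lambda_1}\lambda_1+\frac{-\lambda_1}{\lambda_2-\lambda_1}\lambda_2$. Convexity gives
$$(\lambda_2-\lambda_1)\varphi(0)\le \lambda_2\varphi(\lambda_1)-\lambda_1\varphi(\lambda_2).$$
Rearranging gives $\lambda_2(\varphi(0)-\varphi(\lambda_1))\le -\lambda_1(\varphi(\lambda_2)-\varphi(0))$. Dividing by $-\lambda_1\lambda_2>0$ yields $g(\lambda_1)\le g(\lambda_2)$.

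\textbf{Expected obstacle.} There is no deep step. The only care needed is the sign bookkeeping when dividing by negative $\lambda$ in the last two cases. The hypothesis $x\ne0$ is not essential to the monotonicity itself; it only matters for the subsequent use, where the one-sided limits of $g$ at $0$ give the Gateaux derivative that characterizes Birkhoff orthogonality.
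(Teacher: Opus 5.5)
Your proof is correct. Convexity of $\lambda\mapsto\|x+\lambda y\|$, together with your three sign cases for chord slopes through $0$, gives the monotonicity, and you are right that $x\neq 0$ plays no role. The mixed-sign case matters, since the paper's skewness estimate uses it to compare $g(-\lambda_0)$ with $g(\lambda)$. The paper gives no proof of this lemma and only quotes it from the literature, so your standard convexity argument fills that in completely.
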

\begin{proposition}
	If  $X $ is a Banach space and $t>0$. Then
	
	$$ 
	s(X)\geq \frac{2(A^B_{t}(X)+\lambda-1-\lambda^2-t)}{\lambda(1 +\lambda)}
	$$
	for any $\lambda$ with $0<\lambda\leq 1$. 
\end{proposition}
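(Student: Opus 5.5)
The plan is to reduce the statement to a pointwise estimate. It suffices to prove that for all $x,y\in S_X$ with $x\perp_B y$ and every $\lambda\in(0,1]$,
$$
\frac{\|x+ty\|+\|tx-y\|}{2}\leq 1+t-\lambda+\lambda^2+\frac{\lambda(1+\lambda)}{2}\,s(X).
$$
Taking the supremum over such $x,y$ then gives $A^B_t(X)\le 1+t-\lambda+\lambda^2+\frac{\lambda(1+\lambda)}2 s(X)$. Rearranging this is exactly the claimed lower bound for $s(X)$.

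The tool for connecting norms to $s(X)$ is the one-sided derivative $\rho'_+(u,v)=\lim_{\mu\to0^+}\frac{\|u+\mu v\|-\|u\|}{\mu}$. By Lemma \ref{1} this limit exists, and for $u\in S_X$ it satisfies
$$
\rho'_+(u,v)\le \frac{\|u+\mu v\|-1}{\mu}\le \|u+v\|-1 \qquad (0<\mu\le 1).
$$
For $x,y\in S_X$ the limit in the definition of $s(X)$ equals $\rho'_+(x,y)-\rho'_+(y,x)$. Hence $s(X)\ge \rho'_+(x,y)-\rho'_+(y,x)$, and likewise for any pair of signed copies of $x,y$. Birkhoff orthogonality $x\perp_B y$ gives $\rho'_+(x,y)\ge0$ and $\rho'_+(x,-y)\ge 0$.

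The steps would be as follows.

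\begin{enumerate}
\item Handle the terms linear in $t$ crudely, splitting off the $\lambda$-perturbation. Write $x+ty=(x+\lambda y)+(t-\lambda)y$ and $tx-y=(t-\lambda)x+(\lambda x-y)$. The triangle inequality then yields a bound of the form $\|x+\lambda y\|+\|\lambda x-y\|+$ terms linear in $t$ and $\lambda$. These linear terms should produce the $1+t-\lambda$ part of the target.
\item Bound $\|y-\lambda x\|$ from above via $s(X)$. By the asymmetry inequality, $\rho'_+(-y,x)\le \rho'_+(x,-y)+s(X)$. Monotonicity from Lemma \ref{1}, applied to the difference quotient at $\lambda$ and compared with its limit at $0^+$, should then convert this into an estimate of $\|y-\lambda x\|-1$ by $\lambda$ times derivative quantities plus an error of order $\lambda^2$. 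The error comes from comparing the two norms $\|x\pm\lambda y\|$ at scale $\lambda$ and is the source of the $\lambda^2$ term.
\item Treat $\|x+\lambda y\|$ symmetrically, using the other signed pair.
\item Add the two estimates. The Birkhoff condition cancels the derivative terms $\rho'_+(x,\pm y)$ in the favorable direction, and the two applications of the skewness bound contribute $\frac{\lambda}{2}s(X)$ and $\frac{\lambda^2}{2}s(X)$. This gives the factor $\lambda(1+\lambda)/2$.
\end{enumerate}

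The main obstacle is step 2. Lemma \ref{1} naturally gives lower bounds $\|u+\lambda v\|-1\ge\lambda\rho'_+(u,v)$, whereas we need upper bounds on norms. My first attempt will be the identity $y+\lambda x=(1-\lambda^2)y+\lambda(x+\lambda y)$ together with convexity. This expresses $\|y+\lambda x\|$ through $\|x+\lambda y\|$ at the cost of a $\lambda^2$ term. It then lets the lower bound on one quotient become an upper bound on the other after subtracting, and that subtraction is precisely where $s(X)$ enters. If the constants do not match exactly, I would fall back to optimizing the splitting point (using $\lambda$ versus $\lambda t$) in step 1 before redoing step 2.
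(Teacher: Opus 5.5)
Your reduction to a pointwise inequality is an exact equivalence. But that inequality can fail when $\lambda>t$, and so does the proposition as stated. In a Hilbert space $H$ one has $s(H)=0$ and $A^B_t(H)=\sqrt{1+t^2}$. The claim would therefore force $\sqrt{1+t^2}\le 1+t-\lambda+\lambda^2$, and for $t=\frac{1}{10}$, $\lambda=\frac12$ the right-hand side is $\frac{17}{20}<1$. The break is in your Step 1. The triangle inequality gives $\|x+ty\|\le\|x+\lambda y\|+|t-\lambda|$ and $\|tx-y\|\le\|y-\lambda x\|+|t-\lambda|$, and these produce $1+t-\lambda$ only when $\lambda\le t$. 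The paper's proof fails at the same spot: its lower bounds $\|u+\lambda_0 v\|\ge\|u+tv\|-t+\lambda_0$ and $\|v-\lambda_0 u\|\ge\|tu-v\|-t+\lambda_0$ need $t\ge\lambda_0$. What is provable is the range $0<\lambda\le\min\{1,t\}$. There your Step 1 is fine, and it reduces everything to $\frac{\|x+\lambda y\|+\|y-\lambda x\|}{2}\le 1+\lambda^2+\frac{\lambda(1+\lambda)}{2}s(X)$; this part does not even need $x\perp_B y$.

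Your Steps 2--4 cannot prove this reduced inequality.
One-sided derivatives at $x$ and $y$ bound $\|x+\lambda y\|$ and $\|y-\lambda x\|$ only from below. Your identity for $y+\lambda x$ is a plain triangle inequality with no $s(X)$ in it.
More decisively, take $\ell_1^2$ with $x=e_1$, $y=e_2$, so that $x\perp_B y$. Every $\rho'_+(a,b)$ with $a\in\{\pm x\}$, $b\in\{\pm y\}$ or vice versa equals $1$, so every skewness difference at signed copies of $x,y$ is $0$. Yet $\frac{\|x+ty\|+\|tx-y\|}{2}=1+t>1+t-\lambda+\lambda^2$ for $0<\lambda<1$. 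An argument that invokes the definition of $s(X)$ only at such pairs would prove the bound with $s(X)$ replaced by $0$, which is false.

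The missing idea, which is the one the paper uses, is to test skewness at the rotated pair $w=x+\lambda y$, $z=y-\lambda x$. For this pair $w-\lambda z=(1+\lambda^2)x$ and $z+\lambda w=(1+\lambda^2)y$. Lemma \ref{1} then gives $\rho'_+(w,z)\ge\frac{\|w\|-1-\lambda^2}{\lambda}$ and $\rho'_+(z,w)\le\frac{1+\lambda^2-\|z\|}{\lambda}$, so derivatives at $w,z$ now bound $\|w\|,\|z\|$ from above.
Assume $\|z\|\le\|w\|$ (otherwise pass to $(y,-x)$, which swaps the two norms) and $D=\|w\|+\|z\|-2(1+\lambda^2)>0$. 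Then $\rho'_+(w,z)>0$, and normalizing gives $s(X)\ge\frac{\rho'_+(w,z)}{\|z\|}-\frac{\rho'_+(z,w)}{\|w\|}\ge\frac{D}{\lambda\|w\|}\ge\frac{D}{\lambda(1+\lambda)}$. This is exactly the reduced inequality. The normalization has to be done on the derivatives like this; the paper's quotient $\frac{\|w+\lambda z\|-\|z+\lambda w\|}{\lambda}$ is not a skewness quotient when $\|w\|\neq\|z\|$.
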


\begin{proof}
	Let $0<\lambda_0\leq1$, Since $t>0$ and $s(X)\geq 0$, we can assume that
	
	$$ 
	A^B_{t}(X)+\lambda_0-1-\lambda_0^2-t>0.
	$$ 
	Suppose further that
	$\varepsilon>0$
	and
	$$ 
	\varepsilon<A^B_{t}(X)+\lambda_0-1-\lambda_0^2-t.$$
	By the definition of $A^B_{t}(X)$, we have
	$u, v\in S_X$ 
	with
	$u\perp_Bv$ 
	and 
	\begin{equation} 
		A^B_{t}(X)-\varepsilon<\frac{\|u+tv\|+\|tu-v\|}{2}.
	\end{equation}  
	
	By applying the triangle inequality, we deduce that
	\begin{equation}
		\|u+\lambda_0 v\|\leq\|u\|+\lambda_0\|v\|=1+\lambda_0,
	\end{equation} 
	\begin{equation}
		\|v-\lambda_0 u\|\leq\|v\|+\|\lambda_0u\|=1+\lambda_0,
	\end{equation}
	\begin{equation}
		\|u+tv-tv+\lambda_0 v\|\geq\|u +tv\|-t+\lambda_0,
	\end{equation} 
	\begin{equation}
		\|v-\lambda_0u+tu-tu\|\geq\|tu-v\|-t+\lambda_0.
	\end{equation} 
	
	Next, we divide the discussion into the following two cases:
	
	Case 1:
	$\|u+\lambda_0 v\|\geq \|v-\lambda_0 u\|$. 
	
	Let $w=u+\lambda_0 v,z=v-\lambda_0 u$. Then we have
	\begin{equation}
		\|w-\lambda_0z\|=\|u+\lambda_0^2u \| =1 + \lambda_0^2
	\end{equation}
	\begin{equation}
		\|z+\lambda_0 w\|=\|v+\lambda_0^2v \|=1+\lambda_0^2.
	\end{equation}
	
	By (4), (5), (6), (7) and Lemma \ref{1}, we derive that for any $\lambda$ with $0<\lambda<\lambda_0$,
	
	$$ 
	\begin{aligned}
		\frac{\|w+\lambda z\|-\|w\|}{\lambda}&\geq \frac{\|w - \lambda_0 z\| - \|w\|}{-\lambda_0} \\&= \frac{\|w\| -\|w-\lambda_0 z\|}{\lambda_0}\\&\geq \frac{\|u+ tv\|-1-t-\lambda_0^2+\lambda_0}{\lambda_0}
	\end{aligned}
	$$ 
	
	and
	
	$$
	\begin{aligned}
		\frac{\|z +\lambda w\|-\|z\|}{\lambda}&\leq \frac{\|z + \lambda_0 w\| - \|z\|}{\lambda_0}\\&\leq\frac{-\|tu-v\|+1+\lambda_0^2 +t-\lambda_0}{\lambda_0}.
	\end{aligned}
	$$ 
	Since $ \|w\|\geq\|z\|$, then from the above inequalities and (2.1) that we have
	
	$$ 
	\begin{aligned}
		&\frac{\|w +\lambda z\|-\|z +\lambda w\|}{\lambda}\\&\geq\frac{\|u+tv\|+\| tu - v\| -2(1+\lambda_0^2-\lambda_0+t)}{\lambda_0}\\&\geq\frac{2(A^B_{t}(X)+\lambda_0-1-\lambda_0^2-t-\varepsilon)}{\lambda_0}.
	\end{aligned}
	$$ 
	From (3), (4) and $u\perp_Bv$, we have

	$$
	\|w\|\geq\|u +tv\|-t+\lambda_0 \geq \|tv\|-t +\lambda_0 = \lambda_0> 0,
	$$ 
	$$ 
	\|z\|\geq\|tu-v\|-t+\lambda_0\geq \|tu\|-t +\lambda_0 = \lambda_0> 0.
	$$ 
	Take $x =\frac{w}{\|w\|}$ and $y =\frac{z}{\|z\|}$. 
	Notice that
	$$ 
	A^B_{t}(X)+\lambda_0-1-\lambda_0^2-t-\varepsilon>0.
	$$ 
	 From (1) and (2) that we derive
	$$
	\frac{\|x + \lambda y\| - \|y +\lambda x\|}{\lambda} \geq \frac{2(A^B_{t}(X)+\lambda_0-1-\lambda_0^2-t-\varepsilon)}{\lambda_0(1 +\lambda_0)}.
	$$ 
	Case 2: $\|u+\lambda_0 v\|< \|v-\lambda_0 u\|$. 
	
	Let $w=v+\lambda_0(-u),z=-u-\lambda_0v$. Then we have
	\begin{equation}
		\|w-\lambda_0z\|=\|v+\lambda_0^2v\| =1 + \lambda_0^2,
	\end{equation} 
	\begin{equation}
		\|z+\lambda_0 w\|=\|-u-\lambda_0^2u \|=1+\lambda_0^2.
	\end{equation}
	By (4), (5), (8), (9) and Lemma \ref{1}, we obtain that for any $\lambda$ with $0<\lambda<\lambda_0$,
	
	$$ 
	\begin{aligned}
		\frac{\|w+\lambda z\|-\|w\|}{\lambda}&\geq \frac{\|w - \lambda_0 z\| - \|w\|}{-\lambda_0} \\&= \frac{\|w\| -\|w-\lambda_0 z\|}{\lambda_0}\\&\geq \frac{\|u+ tv\|-1-t-\lambda_0^2+\lambda_0}{\lambda_0}
	\end{aligned}
	$$ 
	
	and
	
	$$
	\begin{aligned}
		\frac{\|z +\lambda w\|-\|z\|}{\lambda}&\leq \frac{\|z + \lambda_0 w\| - \|z\|}{\lambda_0}\\&\leq\frac{-\|tu-v\|+1+\lambda_0^2 +t-\lambda_0}{\lambda_0}.
	\end{aligned}
	$$ 
	Since $ \|w\|<\|z\|$, then, from the above inequalities and (2.1) that we have
	$$ 
	\begin{aligned}
		&\frac{\|w +\lambda z\|-\|z +\lambda w\|}{\lambda}\\&\geq\frac{\|u+tv\|+\| tu - v\| -2(1+\lambda_0^2-\lambda_0+t)}{\lambda_0}\\&\geq\frac{2(A^B_{t}(X)+\lambda_0-1-\lambda_0^2-t-\varepsilon)}{\lambda_0}.
	\end{aligned}
	$$ 
	From (3), (4) and $u\perp_Bv$, we have
	$$
	\|w\|\geq\|u +tv\|-t+\lambda_0 \geq \|tv\|-t +\lambda_0 = \lambda_0> 0,
	$$ 
	$$ 
	\|z\|\geq\|tu-v\|-t+\lambda_0\geq \|tu\|-t +\lambda_0 = \lambda_0> 0.
	$$ 
	Take $x =\frac{w}{\|w\|}$ and $y =\frac{z}{\|z\|}$. 
	Notice that
	$$ 
	A^B_{t}(X)+\lambda_0-1-\lambda_0^2-t-\varepsilon>0.
	$$ 
	Then it follows from (1), (2) that we derive
	$$
	\frac{\|x + \lambda y\| - \|y +\lambda x\|}{\lambda} \geq \frac{2(A^B_{t}(X)+\lambda_0-1-\lambda_0^2-t-\varepsilon)}{\lambda_0(1 +\lambda_0)}.
	$$ 
Based on the concept of $s(X)$, when $\varepsilon \to 0^+$, the inequality is proved .
\end{proof} 

\section{The constant $D_{t}^{B}(X)$}
Lemma \ref{3.2}, which will be used in this chapter, establishes a result concerning skew equality that is both meaningful and interesting. In particular, it can be used to characterize inner product spaces.
Next, we  introduce an orthogonality notion extending isosceles orthogonality, which is also skew.

\begin{definition}
Suppose that  $X$ is a real normed linear space of dimension at least two. For any $x, y \in X$, then $x$ is said to be skew isosceles orthogonal to $y$ 
if (see Figure 1) for the fixed parameter $t$,
$$  \| x+t y\|=\|t x- y\|. $$

\end{definition}

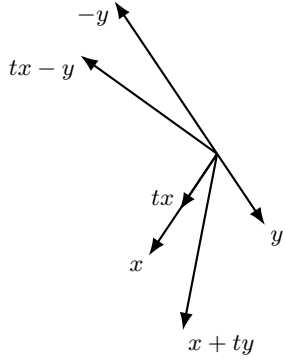
\begin{figure}[H]
	\begin{center}
		\begin{tikzpicture}[
			>=Latex,    
			thick,       
			scale=0.9,  
			every node/.style={font=\small, inner sep=0pt, outer sep=2pt} 
			]
			\coordinate (O) at (0,0); 
			
			\draw[->] (O) -- (-2.0, 1.45)    node[below left] {$t x- y$};
			\draw[->] (O) -- (-1.5,2.25)    node[below left] {$- y$};
			
			\draw[->] (O) -- (-0.55, -0.83) node[above left] {$tx$};
			\draw[->] (O) -- (-1.0,-1.5) node[below left] {$x$};
			
			\draw[->] (O) -- (-0.5, -2.6) node[below right] {$x+t y$};
			
			\draw[->] (O) -- (0.7,-1.05) node[below right] {$ y$};
			
		\end{tikzpicture}
	\end{center}
	\caption{Illustration of skew isosceles orthogonality
		}
	\label{fig:l_inf_l1}
\end{figure}	
\begin{example}
Let $X=\mathbb{R}^2$ endowed with the norm 
$$\|(x_{1},x_{2})\|=\max \{|x_{1}|,|x_{2}|\}.$$
Let $x=(0,1)$, $y=(1,0),$ it is clear that $\| x+t y\|=\|t x- y\|$.
\end{example}

 Based on the constants studied previously and the definition of skew isosceles orthogonality, we introduce a new constant \(D_t^B(X)\) and investigate some of its properties. This new constant characterizes the discrepancy between Birkhoff orthogonality and skew isosceles orthogonality.

Suppose that $X$ is a  Banach space with dimension at least 
2. We first present the key definitions as follows: for fixed $t>0$,
$$
D_{t}^{B}(X)=\sup \left\{\frac{\| x+t y\|-\|t x- y\|}{t}: x, y \in S_X, x \perp_B y\right\} .
$$

\begin{remark}
The new constant \(D_{t}^{B}(X)\) is also a generalization of 	$D^{\prime}(X)$ .
\end{remark}

\begin{lemma}\label{22} 
	Let  $X$  be a Banach space.  Then there exist unit vectors $x, y \in S_X$ such that $x \perp_B y$ and $-y \perp_B x$. 
\end{lemma}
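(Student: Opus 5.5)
The plan is to find a pair $x,y\in S_X$ in which each vector is Birkhoff orthogonal to the other, up to a sign. The observation that settles the sign is that Birkhoff orthogonality is homogeneous: $x\perp_B y$ implies $\alpha x\perp_B \beta y$ for all real $\alpha,\beta$. So $-y\perp_B x$ is equivalent to $y\perp_B x$. It therefore suffices to produce unit vectors $x,y$ that are \emph{mutually} Birkhoff orthogonal, $x\perp_B y$ and $y\perp_B x$. Moreover, Birkhoff orthogonality between two vectors only involves their span. Hence we may restrict to an arbitrary two-dimensional subspace $E\subset X$, which exists since $\dim X\geq 2$, and work in the normed plane $E$.

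In $E$ I would use a classical extremal (Auerbach-type) argument. Fix a basis of $E$ and consider the continuous function
$$\Phi(u,v)=|\det(u,v)|$$
on the compact set $S_E\times S_E$. Let $(x,y)$ be a maximizer. Then $\Phi(x,y)>0$, because linearly independent unit vectors exist.

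Next I check that $x\perp_B y$. Suppose instead that $\|x+ty\|<1$ for some $t$. Put $x'=(x+ty)/\|x+ty\|\in S_E$. Since the determinant is bilinear and alternating,
$$\det(x',y)=\det(x,y)/\|x+ty\|,$$
so $|\det(x',y)|>|\det(x,y)|$. This contradicts maximality, so $\|x+ty\|\geq 1=\|x\|$ for all $t$, which is exactly $x\perp_B y$. The symmetric argument, perturbing $y$ to $(y+sx)/\|y+sx\|$, gives $y\perp_B x$. By homogeneity, $-y\perp_B x$ also holds. Since $E\subset X$ and the norms agree, these relations persist in $X$, which proves the lemma.

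The only step needing care is the maximization. Compactness of $S_E$ holds because $E$ is finite-dimensional. The maximum is positive, so the perturbation argument is legitimate: $x+ty\neq 0$ since $x,y$ are independent. This step is straightforward. An alternative route, if one wanted to avoid determinants, would be a degree argument on the map $u\mapsto$ (a Birkhoff-orthogonal direction); the area-maximization argument is cleaner, and it is the one I would use.
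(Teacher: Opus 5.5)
Your proof is correct and follows the same route as the paper: first obtain a pair of mutually Birkhoff orthogonal unit vectors, then replace $y$ by $-y$ using homogeneity. The only difference is that the paper cites the existence of such a pair from \cite{35}, whereas you prove it directly in a two-dimensional subspace $E$ by maximizing $|\det(u,v)|$ over $S_E\times S_E$ (Auerbach's argument); this makes your version self-contained, and it correctly uses the standing assumption $\dim X\geq 2$, without which the lemma fails.
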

\begin{proof}
According to the results on Birkhoff orthogonality in \rm{\cite{35}}, there exist unit vectors $x_0, y_0 \in X$ such that $x_0 \perp_B y_0$ and $y_0 \perp_B x_0$. Combined with the homogeneity of Birkhoff orthogonality, it follows that there exist $x, y \in S_X$ such that $x \perp_B y$ and $-y \perp_B x$.

\end{proof}

\begin{proposition}\label{2} 
	Let  $X$  be a Banach space. Then
	$$
	0 \leq D_{t}^{B}(X) \leq \frac{1}{t}.
	$$
\end{proposition}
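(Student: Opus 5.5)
The upper bound $D_{t}^{B}(X)\le \frac1t$ comes from the triangle inequality combined with the defining inequality of Birkhoff orthogonality. The lower bound $D_{t}^{B}(X)\ge 0$ comes from Lemma \ref{22}, which supplies a pair of unit vectors that are mutually Birkhoff orthogonal; such a pair produces two admissible configurations whose quotients are negatives of each other.

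\emph{Upper bound.} Fix $x,y\in S_X$ with $x\perp_B y$ and $t>0$.
\begin{itemize}
\item The triangle inequality gives $\|x+ty\|\le \|x\|+t\|y\|=1+t$.
\item For the subtracted term, factor out $t$: $\|tx-y\|=t\,\|x-\tfrac1t y\|$.
\item Since $x\perp_B y$, we have $\|x+sy\|\ge \|x\|=1$ for every $s\in\mathbb{R}$. Taking $s=-\tfrac1t$ yields $\|tx-y\|\ge t$.
\end{itemize}
Hence
$$\frac{\|x+ty\|-\|tx-y\|}{t}\le \frac{(1+t)-t}{t}=\frac1t,$$
and taking the supremum over all admissible $x,y$ gives $D_{t}^{B}(X)\le\frac1t$.

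\emph{Lower bound.} By Lemma \ref{22}, choose $x,y\in S_X$ with $x\perp_B y$ and $-y\perp_B x$. Birkhoff orthogonality is homogeneous, so $-y\perp_B x$ gives $y\perp_B -x$. Thus both pairs $(x,y)$ and $(y,-x)$ are admissible in the definition of $D_{t}^{B}(X)$.
\begin{itemize}
\item For the pair $(x,y)$ the quotient is $\frac{\|x+ty\|-\|tx-y\|}{t}$.
\item For the pair $(y,-x)$ the quotient is $\frac{\|y-tx\|-\|ty+x\|}{t}$.
\item Using $\|y-tx\|=\|tx-y\|$ and $\|ty+x\|=\|x+ty\|$, the second quotient is exactly the negative of the first.
\end{itemize}
A real number and its negative cannot both be negative, so one of the two quotients is $\ge 0$. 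Therefore $D_{t}^{B}(X)\ge 0$.

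The main obstacle is the lower bound. A single orthogonal pair gives no sign information about $\|x+ty\|-\|tx-y\|$, and the pairs $(x,\pm y)$ are not obviously symmetric for $t\neq 1$. The point is that mutual Birkhoff orthogonality, supplied by Lemma \ref{22}, allows the roles of $x$ and $y$ to be swapped, and this swap flips the sign of the quotient.
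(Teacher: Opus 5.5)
Your proof is correct and follows the paper's own argument. The upper bound uses $\|x+ty\|\le 1+t$ together with $\|tx-y\|\ge t$; you justify the latter explicitly from $x\perp_B y$, which the paper leaves implicit. The lower bound uses the mutually orthogonal pair from Lemma \ref{22}: your pair $(y,-x)$ is the negative of the paper's pair $(-y,x)$ and gives the same sign-flipped quotient.
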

\begin{proof}
From  Lemma \ref{22},  there exist unit vectors $x, y \in S_X$ such that  $x \perp_B y$ and $-y \perp_B x$. Consider the two pairs $(x, y)$ and $(-y, x)$,  we can obtain
	$$
	\frac{\|x + ty\| - \|tx - y\|}{t} \quad \text{and} \quad \frac{\|tx - y\| - \|x + ty\|}{t},$$
	which are negatives of each other. Therefore, at least one of them is non-negative, so we conclude  that $D_t^B(X) \ge 0.$
	
	On the other hand,
	 for any \( x, y \in S_X \) and  $x \perp_B y$, we can obtain that 
	$$
	\frac{\|x+t y\|-\|t x-y\|}{t} \leq \frac{1+t-t}{t}=\frac{1}{t},
	$$
as desired.
	
\end{proof}

When the dimension of the space is sufficiently large, the lower bound admits an alternative interpretation by virtue of the celebrated Dvoretzky's theorem, a profound result in functional analysis.
The values of geometric constants are determined by the structure of the space, and this theorem characterizes the subspace structure of high-dimensional Banach spaces.
For studies on geometric constants via Dvoretzky's theorem, we refer the reader to \cite{29},  \cite{323} .

\begin{lemma} (\cite{29} Theorem 10.43). \label{10.43}
	 For all $\varepsilon>0$, every infinite dimensional Banach space $X$ contains $\ell_2^n s(1+\varepsilon)-$ uniformly.
\end{lemma}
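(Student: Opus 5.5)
We argue through Milman's concentration-of-measure proof of Dvoretzky's theorem. Since $X$ is infinite dimensional, it has subspaces of every finite dimension $N$. It therefore suffices to prove the following finite-dimensional statement.

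For every $n\in\mathbb{N}$ and $\varepsilon>0$ there is $N=N(n,\varepsilon)$ such that every $N$-dimensional normed space $F$ contains an $n$-dimensional subspace $E$ with Banach--Mazur distance $d(E,\ell_2^n)\le 1+\varepsilon$.

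Applying this to an $N$-dimensional subspace $F\subset X$, for each $n$, gives the claim that $X$ contains the $\ell_2^n$'s $(1+\varepsilon)$-uniformly.

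\textbf{Step 1: choice of Euclidean structure.} Fix an $N$-dimensional space $F$ and identify it with $\mathbb{R}^N$ via John's maximal-volume ellipsoid, so that $|x|_2/\sqrt N\le\|x\|\le|x|_2$. Using the Dvoretzky--Rogers lemma, we then extract about $N/2$ vectors $e_i$ that are nearly orthonormal for $|\cdot|_2$ and satisfy $\|e_i\|\ge 1/2$. From these we bound the median (equivalently, the mean) $M$ of $\|\cdot\|$ on the Euclidean sphere $S^{N-1}$ from below.

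Specifically, write the uniform measure on $S^{N-1}$ as a normalized Gaussian. By symmetrization and a comparison of Gaussian with Rademacher averages, $\mathbb{E}\|\sum g_i e_i\|\gtrsim\mathbb{E}\max_i|g_i|\gtrsim\sqrt{\log N}$. This yields
\[
M\ \ge\ c\sqrt{\frac{\log N}{N}} .
\]

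\textbf{Step 2: concentration on a net.} The function $\|\cdot\|$ is $1$-Lipschitz on $S^{N-1}$ with respect to the Euclidean metric. By L\'evy's isoperimetric inequality,
\[
\sigma\{x\in S^{N-1}:|\|x\|-M|>\delta M\}\le 2e^{-cN\delta^2M^2}.
\]
Now take a random $k$-dimensional subspace $E$, distributed according to Haar measure. Fix a $\delta$-net of $S^{N-1}\cap E$ of cardinality at most $(3/\delta)^k$. By rotation invariance, each point of the net is uniformly distributed on $S^{N-1}$, so the union bound shows that with positive probability every net point $z$ satisfies
\[
(1-\delta)M\le\|z\|\le(1+\delta)M,
\]
provided $k\log(3/\delta)<cN\delta^2M^2$.

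\textbf{Step 3: from the net to the sphere.} Write an arbitrary $x\in S^{N-1}\cap E$ as $x=\sum_j\lambda_jz_j$ with net points $z_j$ and $|\lambda_j|\le\delta^j$, by successive approximation. This gives $\|x\|\le M(1+\delta)/(1-\delta)$. The reverse bound follows by choosing the net point nearest to $x$ and using the upper bound just obtained. Hence $\|\cdot\|/M$ and $|\cdot|_2$ are $(1+O(\delta))$-equivalent on $E$, and choosing $\delta\approx\varepsilon$ gives $d(E,\ell_2^k)\le1+\varepsilon$.

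Combining Step 2 with the lower bound on $M$ from Step 1, we may take
\[
k\approx\frac{c\,\varepsilon^2\log N}{\log(1/\varepsilon)} .
\]
This tends to infinity with $N$, so it exceeds any prescribed $n$ once $N$ is large enough.

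The main obstacle is Step 1, the lower estimate $M\gtrsim\sqrt{\log N/N}$. Without it, concentration only produces subspaces of bounded dimension. I would follow the Figiel--Lindenstrauss--Milman route: John position, then Dvoretzky--Rogers, then Gaussian averages. The concentration and net arguments of Steps 2 and 3 are standard.
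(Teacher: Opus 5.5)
The paper gives no proof of this statement. It is Dvoretzky's theorem, quoted as Theorem 10.43 of reference [29], and it is used only as a black box in the subsequent Remark, to compare $D_t^B(X)$ with its value on almost-Hilbertian subspaces.

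You instead reproduce Milman's concentration-of-measure proof: John position, Dvoretzky--Rogers, the Gaussian lower bound $M\gtrsim\sqrt{\log N/N}$, L\'evy's inequality on a Haar-rotated net, and successive approximation. The outline is correct. The reduction from infinite-dimensional $X$ to the finite-dimensional statement is exactly what ``$(1+\varepsilon)$-uniformly'' requires. You rightly isolate the lower bound on $M$ as the step that forces $k\to\infty$, and the resulting $k\approx\varepsilon^2\log N/\log(1/\varepsilon)$ is Milman's bound.

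The citation buys brevity for a deep classical result that the paper needs only qualitatively. Your route buys a self-contained argument and an explicit dimension estimate, which is more than needed here, since only $k\to\infty$ matters.

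When you write it out, tighten three points. First, in John position the Dvoretzky--Rogers lemma gives genuinely orthonormal $e_1,\dots,e_{\lfloor N/2\rfloor}$ with $\|e_i\|\ge 1/\sqrt2$. Then $\mathbb{E}\|G\|\ge\mathbb{E}\|\sum_{i\le N/2}g_ie_i\|$ follows by completing to an orthonormal basis and applying Jensen to the independent symmetric remainder. Second, the bound $\mathbb{E}\|\sum g_ie_i\|\ge\mathbb{E}\max_i|g_i|\|e_i\|$ needs only sign-symmetry, namely $\mathbb{E}_\epsilon\|\sum\epsilon_ia_i\|\ge\max_i\|a_i\|$, not a Gaussian--Rademacher comparison. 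Third, mean and median differ by $O(N^{-1/2})$, which is absorbed by $\sqrt{\log N/N}$ only for $N$ large, so say explicitly that $N$ is taken large.
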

\begin{remark}
	Suppose that $x, y \in S_H$ with $x \perp_B y$, then we can obtain
	$$
	\begin{aligned}
		& \frac{\| x+t y\|-\|tx- y\|}{t} \\
		& =\frac{\sqrt{\| x+t y\|^2}-\sqrt{\|tx- y\|^2}}{t} \\
		& =\frac{\sqrt{\|x\|^2+t^2\|y\|^2+2 t\langle x, y\rangle}-\sqrt{t^2\|x\|^2+\|y\|^2-2 t\langle x, y\rangle}}{t}=0
	\end{aligned}
	$$
	so $D_{t}^{B}(H,X) = 0$. By Lemma \ref{10.43}, we have
	$$
	\begin{aligned}
		&D_{t}^{B}(X) \geq D_{t}^{B}(H,X)= 0,
	\end{aligned}
	$$
	for any infinite-dimensional Banach space.
	
\end{remark}

\begin{lemma} \cite{36}\label{3.2}
 Let $(X,\|\cdot\|)$ be a normed space, then $X$ is an inner product space if and only if the following property holds :

$$
x, y \in X,\|x\|=\|y\| \Rightarrow\|tx+y\|=\|x+t y\| \quad\forall t\in \mathbb{R} .
$$	   
\end{lemma}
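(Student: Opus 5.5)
The forward implication is a direct computation, and the converse reduces to showing that every two-dimensional subspace is Euclidean.

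\emph{Forward direction.} If $X$ is an inner product space and $\|x\|=\|y\|$, then
$$\|tx+y\|^2=t^2\|x\|^2+2t\langle x,y\rangle+\|y\|^2=t^2\|y\|^2+2t\langle x,y\rangle+\|x\|^2=\|x+ty\|^2$$
for every $t\in\mathbb{R}$. This proves the implication.

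\emph{Converse: upgrade to a symmetric condition.} First I will show that the hypothesis implies Ficken's symmetric condition: $\|x\|=\|y\|$ implies $\|ax+by\|=\|bx+ay\|$ for all $a,b\in\mathbb{R}$. If $b\neq 0$, put $t=a/b$ and use homogeneity:
$$\|ax+by\|=|b|\,\|tx+y\|=|b|\,\|x+ty\|=\|bx+ay\|.$$
If $b=0$, the identity reads $\|ax\|=\|ay\|$, which holds trivially. Consequently, for any $x,y\in S_X$ that are linearly independent, the linear map $R_{x,y}$ on $E=\operatorname{span}\{x,y\}$ given by $ax+by\mapsto bx+ay$ is an isometry of $(E,\|\cdot\|)$. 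It is an involution that fixes the line through $x+y$ and reverses the line through $x-y$, so it is a linear reflection of $E$.

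\emph{Reduction to planes.} By the Jordan--von Neumann theorem, $X$ is an inner product space as soon as the parallelogram law holds. That law involves only two vectors, so it suffices to show that every two-dimensional subspace $E$ is Euclidean. Fix such an $E$ and let $G$ be its group of linear isometries, a compact subgroup of $GL(E)$. Averaging any inner product over $G$ (equivalently, using the John ellipse of $B_E$) gives a Euclidean structure on $E$ for which $G\subseteq O(2)$.

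\emph{Key step: the isometry group is all of $O(2)$.} Fix $x\in S_E$ and let $y$ range over $S_E\setminus\{\pm x\}$. The fixed lines $\mathbb{R}(x+y)$ of the reflections $R_{x,y}$ take infinitely many distinct directions, since $y\mapsto \mathbb{R}(x+y)$ is continuous and non-constant along the sphere. Hence $G$ is a closed subgroup of $O(2)$ containing infinitely many reflections. Its rotation subgroup is then an infinite closed subgroup of $SO(2)$, so it equals $SO(2)$, and therefore $G=O(2)$. Since $G$ acts transitively on the Euclidean unit circle and preserves $\|\cdot\|$, the function $\|\cdot\|$ is constant on that circle. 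Thus $\|\cdot\|$ is a multiple of the Euclidean norm on $E$ and satisfies the parallelogram law, as required.

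I expect the main obstacle to be the claim that the directions $\mathbb{R}(x+y)$ really vary. This should follow because, in the Euclidean model, $x+y$ ranges over a curve that is not contained in any single line. If the group-theoretic route turns out to be too heavy, I would fall back on a direct argument: derive from the symmetry that isosceles orthogonality is homogeneous ($x\perp_I y\Rightarrow x\perp_I \lambda y$), and then invoke James's characterization of inner product spaces.
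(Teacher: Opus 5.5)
The paper gives no proof of this lemma. It is quoted from the literature (it is essentially Ficken's classical characterization), and only its trivial forward direction is used later. So your argument has to stand on its own, and it does.

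What is correct as written:
\begin{itemize}
\item the passage to the symmetric form $\|ax+by\|=\|bx+ay\|$;
\item the fact that $R_{x,y}$ is a linear isometric involution of $E=\operatorname{span}\{x,y\}$;
\item the reduction to planes via Jordan--von Neumann;
\item the existence of a $G$-invariant inner product $\langle\cdot,\cdot\rangle_G$.
\end{itemize}

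In the key step, one phrase needs tightening. The set $S_E\setminus\{\pm x\}$ has two components, so ``continuous and non-constant'' only guarantees two distinct directions, not infinitely many. You need non-constancy on a single arc $\Gamma$, which is immediate. If $x+y\in\mathbb{R}v$ for all $y\in\Gamma$, then $\Gamma\subset -x+\mathbb{R}v$. Letting $y\to x$ along $\Gamma$ gives $2x\in\mathbb{R}v$, so $\Gamma\subset\mathbb{R}x\cap S_E=\{\pm x\}$, which is absurd.

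Better still, the whole step can be skipped. $R_{x,y}$ maps $x$ to $y$, so $G$ is already transitive on $S_E$. Hence $\langle y,y\rangle_G=\langle x,x\rangle_G$ for every $y\in S_E$, and $\|\cdot\|$ is a multiple of the invariant Euclidean norm. There is no need to count reflections or to classify closed subgroups of $SO(2)$.

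Your fallback is the classical route, and it is genuinely different. Put $u=x+y$, $v=x-y$. Then $\|x\|=\|y\|$ if and only if $\|u+v\|=\|u-v\|$, and every pair $(u,v)$ arises this way. The symmetric condition with $(a,b)=(1+\lambda,1-\lambda)$ reads $\|u+\lambda v\|=\|u-\lambda v\|$. So the hypothesis says precisely that isosceles orthogonality implies Roberts orthogonality, i.e.\ isosceles orthogonality is homogeneous, and James's theorem finishes.

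The two routes trade off as follows:
\begin{itemize}
\item The James route is two lines and matches the orthogonality language the paper works in, but all the real work is hidden inside James's theorem.
\item Your group-theoretic proof is self-contained modulo standard facts (Jordan--von Neumann, Haar averaging or the John ellipse). It also explains the phenomenon directly: the hypothesis says every plane section has an isometry group acting transitively on its unit sphere, which forces that sphere to be an ellipse.
\end{itemize}
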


\begin{proposition}  If $ X $ is an inner product space, then $D_{t}^{B}(X)=0$.
\end{proposition}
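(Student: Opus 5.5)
The plan is to show that every quantity appearing in the supremum defining $D_{t}^{B}(X)$ vanishes. Then the supremum is $0$, and this matches the lower bound $0\le D_t^B(X)$ from Proposition \ref{2}. Fix $t>0$ and take $x,y\in S_X$ with $x\perp_B y$. It suffices to prove
$$\|x+ty\|=\|tx-y\|.$$

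The direct route uses the inner product. In an inner product space, Birkhoff orthogonality coincides with ordinary orthogonality. Indeed, $\|x+sy\|^2=\|x\|^2+2s\langle x,y\rangle+s^2\|y\|^2\ge\|x\|^2$ for all $s\in\mathbb{R}$. Dividing by $s$ and letting $s\to0^{\pm}$ forces $\langle x,y\rangle=0$. Expanding both norms then gives
$$\|x+ty\|^2=1+t^2=\|tx-y\|^2,$$
since $\|x\|=\|y\|=1$ and the cross terms vanish. Taking square roots yields the claim, so each quotient $\frac{\|x+ty\|-\|tx-y\|}{t}$ equals $0$.

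A second route avoids orthogonality altogether and invokes Lemma \ref{3.2}. For $\|x\|=\|y\|$, that lemma gives $\|tx+y\|=\|x+ty\|$ for all real $t$. Apply it to the pair $x$ and $-y$, which have equal norms. This gives $\|tx-y\|=\|x-ty\|$. It then remains to show $\|x-ty\|=\|x+ty\|$, and this again needs $\langle x,y\rangle=0$, i.e. Birkhoff orthogonality. So the direct computation is cleaner, and Lemma \ref{3.2} serves only as a remark.

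No step is a real obstacle. The only point needing care is the equivalence $x\perp_B y\iff\langle x,y\rangle=0$ in an inner product space, and the one-sided derivative argument above settles it. Once every admissible pair gives the value $0$, we get $D_t^B(X)=0$ for every $t>0$.
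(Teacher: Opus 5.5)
Your proof is correct. Your main route differs from the paper's; your second route is exactly the paper's argument.

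\textbf{The paper's route.} It applies Lemma \ref{3.2} to the equal-norm pair $x,-y$ to get $\|tx-y\|=\|x-ty\|$. It then cites the fact that Birkhoff and Roberts orthogonality coincide in an inner product space to get $\|x-ty\|=\|x+ty\|$. Both facts are cited, not proved.

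\textbf{Your route.} You prove $x\perp_B y\Rightarrow\langle x,y\rangle=0$ directly by the one-sided limit argument. You then expand both norms to $1+t^2$.

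\textbf{What each buys.} Your version is self-contained and shows that nothing deep is involved. Only the trivial direction of Lemma \ref{3.2} would ever be used, and that direction is itself a one-line expansion in an inner product space. Your computation is essentially the one the paper already performs in its remark on $D_t^B(H,X)$ just before Lemma \ref{3.2}. The paper's version instead gives a conceptual decomposition of the identity $\|x+ty\|=\|tx-y\|$:
\begin{itemize}
\item a ``skew swap'' $\|tx-y\|=\|x-ty\|$, which uses only $\|x\|=\|y\|$;
\item a Roberts symmetry $\|x-ty\|=\|x+ty\|$, which uses only orthogonality.
\end{itemize}
This split fits the paper's reading of $D_t^B(X)$ as measuring the discrepancy between Birkhoff orthogonality and skew isosceles orthogonality.

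Your appeal to the lower bound in Proposition \ref{2} also settles a point the paper leaves implicit: the admissible set is nonempty, so the supremum is genuinely $0$.
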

\begin{proof} 
Suppose that \(X\) is an inner product space. By Lemma \ref{3.2}, combined with the fact that Roberts orthogonality and Birkhoff orthogonality are equivalent in inner product spaces, we have
\[
\|x+t y\|-\|tx-y\|=\|x+t y\|-\|x-ty\|=0
\]
for  \(x, y \in S_{X}\), and hence \(D_{t}^{B}(X)=0\).

\end{proof}
\begin{example} 
	Let $ X=\mathbb{R}^{2}, t \geq 1$ and assign the following $ \ell_{\infty}-\ell_{1} $ norm
	$$
	\|x\|=\left\{\begin{array}{ll}
		\|x\|_{1}, & x_{1} x_{2} \leq 0 ,\\
		\|x\|_{\infty}, & x_{1} x_{2} \geq 0.
	\end{array}\right.
	$$
	Then $ D_{t}^{B}(X)=\frac{1}{t} $.
	
	Suppose that $x=(1, 0)$, $y=(1,1)$, it is easy to see that 
	 $x, y \in S_{X}$ and $x \perp_{B} y $. We can direct calculation gives
	$\|x+ty\|=1+t$ and $\|tx-y\|=t$.
	So we can obtain that
	$$
	\frac{\|x+t y\|-\|t x-y\|}{t}=\frac{1+t-t}{t}=\frac{1}{t}.
	$$
	
\end{example} 

\begin{theorem}
 The upper bound $\frac{1}{t}$ of $D_{t}^{B}(X)$ is attained by a pair of points of $S_X$ if and only if there exist two points $x, y \in S_X$ such that $[x, y]$ and $\left[x, x-\frac{2}{t} y\right]$ are both contained in $S_X$. Correspondingly, the length of the segment $[x, y]$ is at least $1$.
\end{theorem}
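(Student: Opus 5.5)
The plan is to handle the two implications separately, reducing everything to the one-variable convex function
$$
\varphi(s)=\|x+sy\|,\qquad s\in\mathbb{R},
$$
together with a standard fact about segments on the sphere. If $u,v\in S_X$ and $\|(1-\lambda)u+\lambda v\|=1$ for a single $\lambda\in(0,1)$, then the whole segment $[u,v]$ lies in $S_X$. This follows from convexity of the norm: each point of $[u,v]$ has norm at most $1$, and a point of norm $<1$ would force $\|(1-\lambda)u+\lambda v\|<1$.

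\textbf{Sufficiency ($\Leftarrow$).} Assume $x,y\in S_X$ with $[x,y]\subset S_X$ and $[x,x-\frac{2}{t}y]\subset S_X$.
\begin{itemize}
\item The point $\frac{1}{1+t}x+\frac{t}{1+t}y$ lies on $[x,y]$, so $\|x+ty\|=1+t$.
\item The second segment says $\varphi\equiv 1$ on $[-\frac{2}{t},0]$. Since $-\frac{1}{t}$ lies in this interval, $\|tx-y\|=t\,\varphi(-\frac1t)=t$.
\item A convex function that is constant on a nondegenerate interval is at least that constant on all of $\mathbb{R}$, because its slopes are nondecreasing and equal $0$ on the interval. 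Hence $\varphi(s)\ge 1=\|x\|$ for every $s$, which is exactly $x\perp_B y$.
\end{itemize}
Therefore $\frac{\|x+ty\|-\|tx-y\|}{t}=\frac{1}{t}$, and by Proposition~\ref{2} the upper bound is attained. The final claim about length also comes from Birkhoff orthogonality: taking $s=-1$ gives $\|x-y\|=\varphi(-1)\ge 1$.

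\textbf{Necessity ($\Rightarrow$).} Suppose $x,y\in S_X$ with $x\perp_B y$ and $\|x+ty\|-\|tx-y\|=1$. The triangle inequality gives $\|x+ty\|\le 1+t$. Birkhoff orthogonality gives $\|tx-y\|=t\,\varphi(-\frac1t)\ge t$. Together these force $\|x+ty\|=1+t$ and $\|tx-y\|=t$.
\begin{itemize}
\item The first equality, with the segment fact above, gives $[x,y]\subset S_X$.
\item The second gives $\varphi(-\frac1t)=1=\varphi(0)$ while $\varphi\ge1$. By convexity $\varphi\equiv1$ on $[-\frac1t,0]$, so $[x,x-\frac1t y]\subset S_X$.
\end{itemize}

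\textbf{Main obstacle.} The hard step is passing from $[x,x-\frac1t y]$ to the stated $[x,x-\frac2t y]$. I would first test whether such an extension can hold at all, using a supporting functional. Let $f$ be a norm-one functional that is $1$ on the face containing $x$ and $x-\frac1t y$. Then $f(x)=1$ and $f(y)=0$, so $f(x-\frac2t y)=1$ and $x-\frac2t y$ lies in the supporting hyperplane. But this face need not reach that far, and nothing in the hypotheses forces it to.

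I would then try to replace the attaining pair by a new pair $(\tilde x,\tilde y)$ in $S_X$. The candidates are drawn from the face $F=\{z\in B_X: f(z)=1\}$ through $x$ and $x-\frac1t y$, and from the face $G$ through $[x,y]$. The aim is a pair with $[\tilde x,\tilde y]\subset G$ and $\tilde x$ positioned so that $[\tilde x,\tilde x-\frac2t\tilde y]\subset F$. A natural first attempt is to move $\tilde x$ to an endpoint of $F\cap(x+\mathbb{R}y)$ and use the central symmetry $S_X=-S_X$.

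If no such reparametrisation exists in general, a concrete planar example should settle it. I would check the $\ell_\infty$--$\ell_1$ norm of the preceding example with $x=(1,0)$, $y=(1,1)$. There one can verify directly whether $\|(1,0)-s(1,1)\|=1$ holds on all of $[0,\frac2t]$ or only on $[0,\frac1t]$. That computation decides whether the forward direction, as worded with $\frac{2}{t}$, can be proved or whether the $\frac{2}{t}$ must be read as a typo for $\frac1t$.
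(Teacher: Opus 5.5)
\textbf{Verdict: the forward implication is left open in your proposal, and it cannot be closed, because as worded it is false.} No reparametrisation $(\tilde x,\tilde y)$ will rescue it.

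Your sufficiency direction is correct. It is actually tighter than the paper's, which never checks $x\perp_B y$ for the pair it produces; you get that from convexity of $\varphi$ on the nondegenerate interval $[-\frac{2}{t},0]$.

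Your necessity direction is correct up to $[x,x-\frac1t y]\subset S_X$. The point where you stop is exactly where the paper's proof fails. The paper writes $x-\frac1t y$ as the midpoint of $x$ and $x-\frac2t y$ and concludes $[x,x-\frac2t y]\subset S_X$. As your segment fact makes clear, that step needs $\|x-\frac2t y\|\le 1$, and nothing provides it.

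Your proposed test settles the question once you add one observation. In the $\ell_\infty$--$\ell_1$ plane, $\|(1-s,-s)\|=1$ for $0\le s\le 1$ and $\|(1-s,-s)\|=s$ for $s\ge 1$. So for $t\in[1,2)$ the pair $x=(1,0)$, $y=(1,1)$ attains $\frac1t$, while $\|x-\frac2t y\|=\frac2t>1$.

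Because the right-hand side of the theorem is existential, this one pair is not enough; you also need the following. Suppose $[u,v]\subset S_X$ and $f$ is a norm-one functional with $f(\frac{u+v}{2})=1$. Then $f(u)=f(v)=1$, so $[u,v]$ lies in a single edge of the hexagon. The edge vectors are $\pm(1,0),\pm(0,1),\pm(1,1)$, all of norm $1$, whereas $[\tilde x,\tilde x-\frac2t\tilde y]$ would have norm-length $\frac2t>1$. Hence for $t\in[1,2)$ the bound $\frac1t$ is attained, but no pair whatsoever satisfies the stated condition.

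The correct statement replaces $[x,x-\frac2t y]$ by $[x,x-\frac1t y]$, and your argument proves that version in full. The sufficiency step works verbatim with the interval $[-\frac1t,0]$, which still yields $x\perp_B y$ and $\|tx-y\|=t$. The final claim $\|x-y\|\ge 1$ is fine in either version.
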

\begin{proof}
	Suppose that exists $x, y \in S_{X}$ such that  $x \perp_{B} y$, so we can have the follow inequality,
	$$
	\frac{\|x+ty\|-\|tx-y\|}{t}=\frac{1}{t} .
	$$
	
	Since
	$$
	\frac{1}{t}=\frac{\| x+ty\|-\|tx-y\|}{t} \leq \frac{1+t-t}{t}=\frac{1}{t},
	$$
we can obtain
	$
	\|x+ty\|=1+t
	$
	and
	$
	\|tx-y\|=t
	$.
	 
	Then we have  $$\left\|\frac{1}{1+t} x+\frac{t}{1+t} y\right\|=1,$$  hence $[x, y] \subset S_{X}.$
	
	On the other hand, let $\|tx-y\|=t$  hold for $t>0$, we have
	$$
	\left\|\frac{1}{2} x+\frac{1}{2} x-\frac{1}{2}\dfrac{2}{t}y\right\|=1,
	$$
	This means that the midpoint of $x$ and $x-\dfrac{2}{t}y$	has norm 1,
		 then we have $\left[x, x-\dfrac{2}{t}y\right] \subset S_{X} $. In addition, since $x \perp_{B} y$, we have $ \|x-y\| \geq 1$.
	
	On the contrary, $x, y \in S_{X} $ such that $ [x, y], \left[x, x-\dfrac{2}{t}y\right] \subset S_{X} $, then we have $ \|x+ty\|=1+t$ and $\|tx-y\|=t.$
	
	Hence
	$$
	\frac{\|x+ty\|-\|tx-y\|}{t}=\frac{1+t-t}{t}=\frac{1}{t}.
	$$
\end{proof}

Next, we establish the relationship between the geometric constant $D_{t}^{B}(X)$
and the modulus of convexity  $\delta_{X}(1) $ of the space $X$.

\begin{theorem}
		Let $X$  be Banach space, if $D_{t}^{B}(X)=\frac{1}{t}$ then $\delta_{X}(1)=0.$
\end{theorem}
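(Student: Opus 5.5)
The plan is to work with near-extremal pairs rather than exact ones, since the supremum defining $D_{t}^{B}(X)$ need not be attained (so the preceding theorem cannot be invoked directly). From these pairs we extract vectors $x,y\in S_X$ with $\|x-y\|\ge 1$ and $\|\frac{x+y}{2}\|$ arbitrarily close to $1$. By the definition of $\delta_X$, this forces $\delta_X(1)=0$.

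\emph{Step 1 (choosing the pairs).} Assume $D_{t}^{B}(X)=\frac1t$ and fix $\varepsilon>0$. Choose $x,y\in S_X$ with $x\perp_B y$ and
$$\frac{\|x+ty\|-\|tx-y\|}{t}>\frac1t-\varepsilon .$$
Birkhoff orthogonality, applied with the scalar $-\frac1t$, gives $\|tx-y\|=t\|x-\tfrac1t y\|\ge t\|x\|=t$. Combining this with the previous display yields
$$\|x+ty\|>1-t\varepsilon+\|tx-y\|\ge 1+t-t\varepsilon .$$
Together with the trivial bound $\|x+ty\|\le 1+t$, this says $x+ty$ almost attains the triangle inequality. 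This is exactly the mechanism used in the proof of the preceding theorem, now in approximate form.

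\emph{Step 2 (transferring to the midpoint $\frac{x+y}{2}$).} Split into two cases according to the size of $t$.
If $t\ge1$, write $x+ty=(x+y)+(t-1)y$. Then
$$\|x+y\|\ge \|x+ty\|-(t-1)>2-t\varepsilon .$$
If $0<t<1$, write $x+ty=t(x+y)+(1-t)x$. Then
$$t\|x+y\|\ge 1+t-t\varepsilon-(1-t),\qquad\text{so}\qquad \|x+y\|>2-\varepsilon .$$
In both cases $1-\|\frac{x+y}{2}\|<\frac{\max\{1,t\}\,\varepsilon}{2}$.

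\emph{Step 3 (separation and conclusion).} Since $x\perp_B y$, taking the scalar $-1$ in the definition gives $\|x-y\|\ge\|x\|=1$. Hence $x,y\in B_X$ are admissible in the definition of $\delta_X(1)$, and therefore
$$0\le\delta_X(1)\le 1-\Big\|\frac{x+y}{2}\Big\|<\frac{\max\{1,t\}\,\varepsilon}{2}.$$
Letting $\varepsilon\to0^+$ gives $\delta_X(1)=0$.

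The only delicate point is Step 2. It has to convert the near-equality $\|x+ty\|\approx 1+t$ into the near-equality $\|x+y\|\approx 2$ with constants that stay controlled, and for this the splitting into $t\ge1$ and $t<1$ is essential. Everything else follows directly from the definitions and Birkhoff orthogonality.
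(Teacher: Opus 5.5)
Your proposal is correct and follows essentially the same route as the paper. Both arguments take near-extremal pairs with $x\perp_B y$, use $x\perp_B y$ to get $\|tx-y\|\ge t$, and apply the same two decompositions of $x+ty$ (for $t\ge 1$ and $t<1$) to force $\|x+y\|\to 2$. Your version is tidier, since you keep $t$ fixed (the paper's auxiliary sequence $t_p\to t$ and its separate case $t=1$ are unnecessary for a fixed parameter), and you explicitly check the admissibility condition $\|x-y\|\ge 1$, which the paper leaves implicit.
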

\begin{proof}
	If  $D_{t}^{B}(X)=\frac{1}{t} $, we assume that $p \in \mathbb{N} $, then we have two points $ x_{p}, y_{p} \in S_{X}$  satisfying  $x_{p} \perp_{B} y_{p}$  and $ t_{p}>0 $ such that
	$$
	\frac{\left\| x_{p}+t_{p} y_{p}\right\|-\left\|t_{p} x_{p}-y_{p}\right\|}{t_{p}}>\frac{1}{t}-\frac{1}{p} .
	$$
	
Then we assume that $ t\geq 0$ such that  $t=\lim _{p \rightarrow \infty} t_{p}.$ Next we will distinguish three cases on $t.$
	
	Case 1:  $0 \leq t < 1.$ When  $p$  is sufficiently large we may assume that  $0 \leq t_{p}\leq1.$ From the following inequality
	$$
	\left\| x_{p}+t_{p} y_{p}\right\| \leq t_{p}\left\|x_{p}+y_{p}\right\|+\left(1-t_{p}\right)\left\|x_{p}\right\|.
	$$
	
	We have
	$$
	\begin{aligned}
		t_{p}\left\|x_{p}+y_{p}\right\| & \geq\left\| x_{p}+t_{p} y_{p}\right\|-\left(1-t_{p}\right)\left\| x_{p}\right\| \\
		& >\left(\frac{1}{t}-\frac{1}{p}\right) t_{p}+\left\|t_{p} x_{p}-y_{p}\right\|-1+t_{p} \\
		& \geq\left(\frac{1}{t}-\frac{1}{p}\right) t_{p}+t_p-1+t_{p} \\
		& =\left(\frac{1}{t}-\frac{1}{p}+2\right) t_{p}-1.
	\end{aligned}
	$$
	
	Hence $ \left\|x_{p}+y_{p}\right\|>\frac{1}{t}-\frac{1}{p}+2-\frac{1}{t_p}$ . Therefore we have
	$$
	\lim _{p \rightarrow \infty}\left(1-\frac{1}{2}\left\|x_{p}+y_{p}\right\|\right)=0,
	$$
	which implies that  $\delta_{X}(1)=0 $.
	
	Case 2:  $ t>1.$ When  $p$  is sufficiently large we may assume that  $ t_{p}>1.$ From the following inequality
	$$
	\left\| x_{p}+t_{p} y_{p}\right\| \leq \left\|x_{p}+y_{p}\right\|+\left(t_{p}-1\right)\left\|y_{p}\right\|.
	$$
	
	We have
	$$
	\begin{aligned}
		\left\|x_{p}+y_{p}\right\| & \geq\left\| x_{p}+t_{p} y_{p}\right\|-\left(t_{p}-1\right)\left\| y_{p}\right\| \\
		& >\left(\frac{1}{t}-\frac{1}{p}\right) t_{p}+\left\|t_{p} x_{p}-y_{p}\right\|+1-t_{p} \\
		& \geq\left(\frac{1}{t}-\frac{1}{p}\right) t_{p}+t_p+1-t_{p} \\
		& =\left(\frac{1}{t}-\frac{1}{p}\right) t_{p}+1.
	\end{aligned}
	$$
	 Therefore we have $\delta_{X}(1)=0 $.
	
	Case 3:  $t=1$, then we have $\left\{t_{p}\right\}_{p=1}^{\infty} $ contained either in  $[0,1]$  or in $(  1,+\infty  )$. By applying as employed in above the two cases , we have $\delta_{X}(1)=0$.
\end{proof}

\begin{proposition}
	Let $X$ be Banach space with $\delta_{X}(1)>0$ and $t \in (0,1] $, then $$D_{t}^{B}(X)<\frac{1-2 t \delta_{X}(1)}{t} .$$
\end{proposition}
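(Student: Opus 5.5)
The plan is to bound the quotient pointwise and then take the supremum. Fix $t\in(0,1]$ and $x,y\in S_X$ with $x\perp_B y$. I would estimate the two terms $\|x+ty\|$ and $\|tx-y\|$ separately, using Birkhoff orthogonality for both and the modulus of convexity for the first.

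\emph{Lower bound for the subtracted term.} Birkhoff orthogonality $x\perp_B y$ means $\|x+\lambda y\|\ge \|x\|=1$ for every $\lambda\in\mathbb{R}$. Taking $\lambda=-1/t$ gives
$$\|tx-y\|=t\left\|x-\tfrac1t y\right\|\ge t.$$

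\emph{Upper bound for the first term.} Taking $\lambda=-1$ gives $\|x-y\|\ge 1$. Since $x,y\in B_X$, the definition of $\delta_X(1)$ yields $\|x+y\|\le 2-2\delta_X(1)$. Because $t\in(0,1]$, I would write $x+ty$ as the convex-type combination $t(x+y)+(1-t)x$, exactly as in Case 1 of the preceding theorem. This gives
$$\|x+ty\|\le t\|x+y\|+(1-t)\le 2t-2t\delta_X(1)+1-t=1+t-2t\delta_X(1).$$

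\emph{Combining.} Subtracting the two bounds gives
$$\frac{\|x+ty\|-\|tx-y\|}{t}\le \frac{1+t-2t\delta_X(1)-t}{t}=\frac{1-2t\delta_X(1)}{t}.$$
Taking the supremum over all admissible $x,y$ gives $D_t^B(X)\le \frac{1-2t\delta_X(1)}{t}$.

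\emph{Main obstacle: strict inequality.} The supremum need not be attained, so the pointwise bound only gives the non-strict inequality. I would argue by contradiction. Suppose equality holds, and take $x_n\perp_B y_n$ in $S_X$ whose quotients tend to $\frac{1-2t\delta_X(1)}{t}$. Then both estimates above must become asymptotically sharp:
$$\|tx_n-y_n\|\to t \quad\text{and}\quad \|x_n+y_n\|\to 2-2\delta_X(1).$$
To contradict this I would try to show that $\|x_n-y_n\|$ stays bounded away above $1$, using $\|tx_n-y_n\|\to t$ together with $\|x_n-y_n\|\ge1$. Then I would invoke the standard fact that $\delta_X$ is strictly increasing on $[\varepsilon_0,2]$, where $\varepsilon_0=\sup\{\varepsilon:\delta_X(\varepsilon)=0\}<1$ because $\delta_X(1)>0$. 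This would give $\|x_n+y_n\|\le 2-2\delta_X(1+\eta)<2-2\delta_X(1)$ for some fixed $\eta>0$, which is the contradiction.

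If separating $\|x_n-y_n\|$ from $1$ fails, I would instead try to show that the triangle-inequality step $\|x+ty\|\le t\|x+y\|+(1-t)$ cannot be asymptotically tight simultaneously with the other two estimates, which would also yield strictness. This strictness step is where I expect the real difficulty. The non-strict bound itself is routine.
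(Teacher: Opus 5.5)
Your non-strict estimate is exactly the paper's proof. Birkhoff orthogonality gives $\|x-y\|\ge 1$ and $\|tx-y\|\ge t$, the definition of $\delta_X(1)$ gives $\|x+y\|\le 2-2\delta_X(1)$, and the decomposition $x+ty=t(x+y)+(1-t)x$ finishes it. The paper stops there, so it actually proves only $D_t^B(X)\le \frac{1-2t\delta_X(1)}{t}$. You are right that the strict sign needs more, but your sketch for it has a genuine gap.

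Your main plan is to keep $\|x_n-y_n\|$ away from $1$ ``using $\|tx_n-y_n\|\to t$ together with $\|x_n-y_n\|\ge 1$''. Those facts push the other way. The function $\phi_n(\lambda)=\|x_n-\lambda y_n\|$ is convex with minimum $\phi_n(0)=1$, and $\phi_n(1/t)=\|tx_n-y_n\|/t\to 1$. So for every $\lambda\in[0,1/t]$
\[
1\le\phi_n(\lambda)\le(1-\lambda t)+\lambda t\,\phi_n(1/t)\longrightarrow 1 .
\]
Since $1/t\ge 1$, this forces $\|x_n-y_n\|\to 1$. Your fallback fails as well: at $t=1$ the step $\|x+ty\|\le t\|x+y\|+(1-t)$ is an identity, so it can never be the slack estimate.

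The missing idea is that the contradiction comes from the nearly flat segment $[x_n,x_n-y_n]$ on the sphere, not from $\|x_n+y_n\|$. The display also gives $\|x_n-\tfrac{1}{2}y_n\|\to 1$. Take $u_n=x_n$ and $v_n=(x_n-y_n)/\|x_n-y_n\|$, both in $S_X$. Then $v_n$ differs from $x_n-y_n$ by a vector of norm $\|x_n-y_n\|-1\to 0$. Hence $\varepsilon_n:=\|u_n-v_n\|\to\|y_n\|=1$, and $\|u_n+v_n\|\to 2$ because $\|2x_n-y_n\|=2\phi_n(\tfrac{1}{2})\to 2$. This gives $\delta_X(\varepsilon_n)\to 0$, and continuity of $\delta_X$ on $[0,2)$ yields $\delta_X(1)=0$, a contradiction. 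For $t<1$ you can avoid continuity by using $\lambda=1+\eta\le 1/t$ in place of $\lambda=1$, so that $\|u_n-v_n\|\ge 1$ eventually.

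The same computation, applied directly to an arbitrary Birkhoff-orthogonal unit pair, shows that $\delta_X(1)>0$ implies $\inf\{\|x-y\|: x,y\in S_X,\ x\perp_B y\}>1$. So the separation from $1$ you wanted is true, but its source is $\delta_X(1)>0$. Once you have it, the contradiction with $\|x_n-y_n\|\to 1$ is immediate, and you do not need strict monotonicity of $\delta_X$.
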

\begin{proof}
	Suppose that $ x, y \in S_{X}$ and $ x \perp_{B}y$, then we have  $\| x- y\| \geq 1$, By the definition of  $\delta_{x}(1)$, we can obtain that $\|x+y\| \leq 2\left(1-\delta_{x}(1)\right)$.
	
So we can obtain that
	$$
	\begin{aligned}
		\| x+t y\|& \leq t\|x+y\|+1-t \\
		& \leq 2t(1-\delta x(1))+1-t .
	\end{aligned}
	$$
	
	Thus
	$$
	\begin{aligned}
		\frac{\| x+t y\|-\|t x-y\|}{t} & \leq \frac{2t(1-\delta x(1))+1-t-t}{t} \\
		& =\frac{1-2 t \delta_{X}(1)}{t}.
	\end{aligned}
	$$
\end{proof}

\begin{proposition} \label{4} 
	For any Banach space  $X$, $t\geq1$, we have
	$$
	D_{t}^{B}(X) \leq J^{B}(X)-1.
	$$
\end{proposition}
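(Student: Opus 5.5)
We prove the bound pair by pair: for fixed $x,y\in S_X$ with $x\perp_B y$ and $t\ge 1$ we aim to show
$$\frac{\|x+ty\|-\|tx-y\|}{t}\le J^B(X)-1,$$
and then take the supremum. Put $s=\frac1t\in(0,1]$. Dividing numerator and denominator by $t$ gives the expression $\|y+sx\|-\|x-sy\|$. Two elementary facts handle this. First, Birkhoff orthogonality gives $\|x-sy\|\ge\|x\|=1$. Second, convexity of the norm along the segment from $y$ to $y+x$ gives $\|y+sx\|\le(1-s)\|y\|+s\|x+y\|=1-s+s\|x+y\|$. Hence
$$\frac{\|x+ty\|-\|tx-y\|}{t}\le s\bigl(\|x+y\|-1\bigr).$$

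We then split according to which of $\|x+y\|$, $\|x-y\|$ is smaller. If $\|x+y\|\le\|x-y\|$, then $\|x+y\|=\min\{\|x+y\|,\|x-y\|\}\le J^B(X)$. Since $J^B(X)\ge 1$ and $s\le1$, this gives $s(\|x+y\|-1)\le J^B(X)-1$, which settles this case.

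The main obstacle is the case $\|x-y\|<\|x+y\|$, where $\|x+y\|$ is not controlled by $J^B(X)$. I would first try to redo the estimate with the roles of the two terms shifted, so that $\|x-y\|$ appears instead of $\|x+y\|$. One option is to bound $\|x-sy\|$ from below via
$$x-sy=(x+y)-(1+s)y,\qquad\text{so}\qquad \|x-sy\|\ge\|x+y\|-(1+s).$$
This can be combined with the trivial bound $\|y+sx\|\le 1+s$, or with a convexity bound that passes through $x-y$ rather than $x+y$. Since $x\perp_B(-y)$ as well, $J^B(X)$ also bounds $\min\{\|x+y\|,\|x-y\|\}$ for the pair $(x,-y)$. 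The task is therefore to find an inequality in which the quantity $\|x-y\|$ (now the minimum) dominates the difference $\|y+sx\|-\|x-sy\|$.

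If no direct estimate works, the fallback is a normalisation trick in the spirit of the proof of the skewness inequality earlier in this section. We would replace $(x,y)$ by a suitably normalised Birkhoff orthogonal pair built from $x\pm y$, for which the roles of $\|x+y\|$ and $\|x-y\|$ are exchanged, and then apply the first case to that pair. Throughout we use $t\ge1$ exactly through $s\le 1$, which keeps the factor $s$ in front of $(\|x+y\|-1)$ at most $1$.
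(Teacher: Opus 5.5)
\paragraph{Gap: the case $\|x-y\|<\|x+y\|$ is left open and cannot be closed.} Your first estimate is correct and is exactly the paper's computation. With $s=1/t$ you get $\frac{\|x+ty\|-\|tx-y\|}{t}\le s(\|x+y\|-1)$, and your treatment of the case $\|x+y\|\le\|x-y\|$ is fine. The gap is the other case, $\|x-y\|<\|x+y\|$, where you only list strategies to try. No strategy can work there, because the proposition is false. The paper's proof does not handle this case either. After reaching $\frac{\|x+y\|-1}{t}\le \|x+y\|-1$, it replaces $\|x+y\|$ by $J^B(X)$. That step is unjustified, since $J^B(X)$ only bounds $\min\{\|x+y\|,\|x-y\|\}$. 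So you have located precisely the step where the argument breaks.

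\paragraph{Counterexample.} Take the $\ell_\infty$--$\ell_1$ plane that the paper itself uses.
\begin{itemize}
\item With $x=(1,0)$ and $y=(1,1)$ one has $x\perp_B y$, $\|x+ty\|=\|(1+t,t)\|=1+t$ and $\|tx-y\|=\|(t-1,-1)\|=t$ for $t\ge 1$. Hence $D_t^B(X)=1/t$, which is the upper bound.
\item The unit ball is a hexagon, so $X$ is uniformly non-square and $J^B(X)\le J(X)<2$.
\item Checking the edges directly gives $J^B(X)=J(X)=3/2$, attained at $x=(1,\tfrac12)$, $y=(0,1)$. Here $x\perp_B y$ and $\|x\pm y\|=3/2$.
\end{itemize}
Therefore $D_t^B(X)=1/t>1/2=J^B(X)-1$ for every $t\in[1,2)$.

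\paragraph{Why your proposed fixes fail.} At $t=1$ the pair above has $\|x+y\|=2$ and $\|x-y\|=1$. So $\|x+y\|-\|x-y\|=1$, while $\min\{\|x+y\|,\|x-y\|\}-1=0$. This rules out any inequality in which $\|x-y\|$ dominates the difference, as you hope for in the second case. The fallback of passing to normalised combinations of $x\pm y$ fails for the same reason, and it also does not preserve Birkhoff orthogonality.

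\paragraph{What your argument does prove, and a consequence.} Your estimate yields only
\[
D_t^B(X)\le \frac1t\left(\sup\{\|x+y\|: x,y\in S_X,\ x\perp_B y\}-1\right).
\]
This cannot be converted into a bound via $J^B(X)$, since $\|x+y\|$ can equal $2$ for Birkhoff orthogonal unit vectors in a uniformly non-square space. The same example also refutes the corollary the paper derives from this proposition, at $t=1$.
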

\begin{proof}
Assume that $t\geq1$,  combined with the  conditions that $ x, y \in S_{X}$ and $ x \perp_{B}y$, then we have
	$$
	\begin{aligned}
		&\frac{\|x+ty\|-\|tx-y\|}{t} \\
		&\leq \frac{\|x+y\|+|1-t|\|y\|-t}{t} \\
		&=\frac{\|x+y\|+t-1-t}{t} \\
		&\leq\|x+y\|-1,
	\end{aligned}
	$$
so we can obtain that	$$
	D_{t}^{B}(X) \leq J^{B}(X)-1.
	$$
\end{proof}
Note that a Banach space $X$ is uniformly non-square if and only if
$J^B(X)<2$ \cite{12}, by the above Proposition \ref{4}, we deduce the
following Corollary.

\begin{corollary} Let $X$ be a Banach space. Then $D_{t}^{B}(X)=1$  for $t \geq 1$
	if and only if $X$ is not uniformly non-square.
\end{corollary}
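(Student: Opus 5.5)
The plan is to reduce both directions to known characterizations of uniform non-squareness through Proposition \ref{4}, which gives $D_{t}^{B}(X)\le J^{B}(X)-1$ for $t\ge 1$.

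\emph{Forward direction.} Suppose $D_{t}^{B}(X)=1$ for some $t\geq 1$. Proposition \ref{4} then gives
$$1=D_{t}^{B}(X)\le J^{B}(X)-1,$$
so $J^{B}(X)\ge 2$. Since $J^{B}(X)\le 2$ always, $J^{B}(X)=2$. By the characterization quoted just before the corollary ($X$ is uniformly non-square iff $J^{B}(X)<2$, \cite{12}), $X$ is not uniformly non-square. This direction works uniformly in $t\ge1$ and needs nothing else.

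\emph{Converse direction.} Assume $X$ is not uniformly non-square. I would first treat $t=1$, where $D_{1}^{B}(X)=D'(X)$. The aim is to build pairs $x_n\perp_B y_n$ in $S_X$ with $\|x_n+y_n\|\to 2$ and $\|x_n-y_n\|\to 1$.

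Failure of uniform non-squareness gives $u_n,v_n\in S_X$ with $\|u_n+v_n\|\to2$ and $\|u_n-v_n\|\to2$. The steps are then:
\begin{itemize}
\item Pass to the two-dimensional subspace spanned by $u_n,v_n$. There the unit sphere is $\varepsilon_n$-close to a parallelogram whose vertices are near $\pm\frac{u_n+v_n}{2},\pm\frac{u_n-v_n}{2}$ after normalization.
\item Take $x_n$ to be a normalized vertex, for instance $\frac{u_n-v_n}{\|u_n-v_n\|}$.
\item Take $y_n$ to be the normalized direction of an adjacent side, chosen via a supporting functional so that $x_n\perp_B y_n$ holds exactly (Birkhoff orthogonality can always be achieved within a $2$-dimensional subspace by choosing $y_n$ in the kernel of a norming functional of $x_n$).
\item Estimate $\|x_n+y_n\|$ and $\|x_n-y_n\|$ by the triangle inequality against the approximate parallelogram vertices. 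This gives $\|x_n+y_n\|-\|x_n-y_n\|\to1$, hence $D'(X)=1$; alternatively one can simply cite the Ji--Wu result \cite{28} that $D'(X)=1$ exactly when $X$ is not uniformly non-square.
\end{itemize}
This mirrors the $\ell_\infty$--$\ell_1$ example above, where $x=(1,0)$ and $y=(1,1)$ realize the extremal value.

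\emph{Main obstacle: $t>1$.} Proposition \ref{2} gives $D_{t}^{B}(X)\le \frac1t<1$ for every $t>1$, so the equality $D_{t}^{B}(X)=1$ cannot hold for $t>1$. Consequently:
\begin{itemize}
\item For $t>1$ the forward implication is vacuously true.
\item For $t>1$ the converse is false as literally stated: $\ell_\infty^2$ is not uniformly non-square, yet $D_{t}^{B}(\ell_\infty^2)\le\frac1t<1$.
\end{itemize}
So the corollary as worded holds in both directions only at $t=1$, and the proof above establishes exactly that, with the forward direction valid for all $t\ge 1$.

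For $t>1$, the natural correct version of the converse is that $X$ not uniformly non-square forces $D_{t}^{B}(X)=\frac1t$. I would prove this by running the same approximate-parallelogram construction: with $x_n$ a vertex and $y_n$ along an adjacent side, one gets $\|x_n+ty_n\|\to1+t$ and $\|tx_n-y_n\|\to t$. This matches the segment description in the theorem characterizing when the bound $\frac1t$ is attained. I would state this corrected form explicitly alongside the $t=1$ equivalence.
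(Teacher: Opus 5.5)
\textbf{Gap in the forward direction: the inequality it uses is false.} Your forward direction is exactly the paper's argument; the paper gives nothing for the converse. It fails because the inequality $D_t^B(X)\le J^B(X)-1$ is false. The last step of its proof bounds $\|x+y\|-1$ by $J^B(X)-1$, but $J^B(X)$ only controls $\min\{\|x+y\|,\|x-y\|\}$, not $\|x+y\|$.

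The $\ell_\infty$--$\ell_1$ plane, which you yourself cite as ``mirroring'' the extremal case, is a counterexample. Take $x=(1,0)$ and $y=(1,1)$. Then $x\perp_B y$, $\|x+y\|=\|(2,1)\|_\infty=2$ and $\|x-y\|=\|(0,-1)\|=1$, so $D_1^B(X)=1$. Yet this plane is uniformly non-square. By compactness, a finite-dimensional space that is not uniformly non-square contains $u,v\in S_X$ with $\|u\pm v\|=2$. In dimension two this forces $B_X=\mathrm{conv}\{\pm u,\pm v\}$ to be a parallelogram, whereas here $B_X$ is a hexagon. In particular $J^B(X)-1\le J(X)-1<1=D_1^B(X)$.

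So the ``only if'' direction is false at $t=1$. You correctly observed that $D_t^B(X)\le 1/t<1$ kills the ``if'' direction for $t>1$ (e.g.\ $\ell_\infty^2$). Together, the corollary is false for every $t\ge1$, and no proof of the stated equivalence is possible. Your conclusion that it holds at $t=1$ is wrong. The fallback citation of Ji--Wu for ``$D'(X)=1$ \emph{exactly} when $X$ is not uniformly non-square'' cannot be right either, for the same reason. What survives at $t=1$ is:
\begin{itemize}
\item the single implication ``not uniformly non-square $\Rightarrow D_1^B(X)=1$'';
\item the weaker necessary condition $D_1^B(X)=1\Rightarrow\delta_X(1)=0$, which holds since $x\perp_B y$ gives $\|x-y\|\ge1$. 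The hexagon satisfies it, because its sides have length $1$.
\end{itemize}

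\textbf{The construction for the surviving implication picks the wrong point.} The near-parallelogram in $\mathrm{span}\{u_n,v_n\}$ is $\mathrm{conv}\{\pm u_n,\pm v_n\}$, whose vertices are $\pm u_n,\pm v_n$. The vector $(u_n-v_n)/\|u_n-v_n\|$ is approximately the midpoint of the side $[u_n,-v_n]$, not a vertex. Any norming functional $f$ there has $f(u_n)\approx1$ and $f(v_n)\approx-1$. So $y_n\in\ker f$ is approximately $\pm(u_n+v_n)/2$, and $\|x_n+y_n\|-\|x_n-y_n\|\to0$, not $1$.

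Taking the vertex itself does not help in general. For $u=2^{-1/p}(1,1)$ and $v=2^{-1/p}(1,-1)$ in $\ell_p^2$ with $p$ large, $\|u\pm v\|=2^{1-1/p}\approx2$. But the unique norming functional at $u$ annihilates $v$, so $y=\pm v$ and the difference is $0$.

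A choice that works: if $\|u_n\pm v_n\|\ge2-\delta_n$, let $x_n$ be the normalization of $(1-s_n)u_n+s_nv_n$ with $s_n=\sqrt{\delta_n}$. Then every norming functional $f$ of $x_n$ satisfies $f(u_n),f(v_n)\ge1-O(\sqrt{\delta_n})$. Hence $y_n\propto f(v_n)u_n-f(u_n)v_n$ is close to $(u_n-v_n)/2$, and $\|x_n+ty_n\|\to1+t$, $\|tx_n-y_n\|\to t$ for $t\ge\frac12$. This gives $D_1^B(X)=1$, and also your corrected claim $D_t^B(X)=1/t$ for $t>1$. That claim holds only as an implication, since the uniformly non-square hexagonal plane also has $D_t^B(X)=1/t$.
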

\begin{proof} 
	By the Proposition \ref{4}, 
	when  $D_{t}^{B}(X)=1$, we can deduce that
	$$
	1\leqslant J^B(X)-1 .
	$$
	This shows that \( J^B(X) \geq 2 \). It should be noted that the upper bound is 2, and combined with the conclusion that \( J^B(X) = 2 \) if and only if the space \( X \) is not uniformly nonsquare \cite{6}, as desired.
\end{proof}

\section{Some applications of $A^B_{t}(X)$ and $D_{t}^{B}(X)$}

Recall that an orthogonality relation $ \perp $
is calls symmetric if the condition $x\perp y$
necessarily entails  $\perp x$
. It is straightforward to verify that the canonical orthogonality defined on Hilbert spaces is symmetric. In contrast, it is a well-documented fact that Birkhoff orthogonality lacks this symmetry property. Nevertheless, James established the following conclusion in the work \rm{\cite{24}}\label{L1}.

\begin{figure}[htbp]
	\centering
	\begin{tikzpicture}[scale=1.5, >=Stealth]
		
		\draw (-1.5,0) -- (1.5,0) ;
		\draw (0,-1.5) -- (0,1.5);

		\draw[very thick, opacity=0.7] (1,1)  -- (0,1) -- (-1,0) --(-1,-1) --(0,-1) --(1,0) -- cycle;
	\end{tikzpicture}
	\caption{Geometric explanation of $\ell_\infty-\ell_1$.}
	\label{fig:2_inf_l1}
\end{figure}
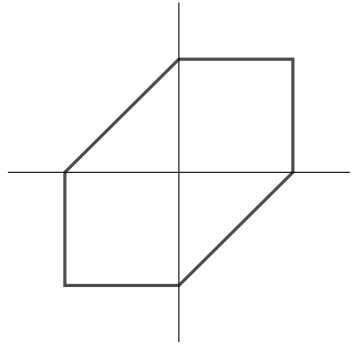	

\begin{lemma} \rm{\cite{24}}\label{L2}
A Banach space $X$ with dimension $\operatorname{dim} X \geq 3$ is a Hilbert space if and only if Birkhoff orthogonality is symmetric on $X$.
\end{lemma}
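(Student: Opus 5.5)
This is James' classical theorem from \cite{24}, quoted as a lemma, so the paper will rely on the citation; still, here is how I would prove it. The direction ``Hilbert $\Rightarrow$ symmetric'' is immediate. In an inner product space, $x\perp_B y$ means $\|x+ty\|^2=\|x\|^2+2t\langle x,y\rangle+t^2\|y\|^2\ge\|x\|^2$ for all $t$. Since this holds for all $t$ of both signs, it forces $\langle x,y\rangle=0$. Conversely $\langle x,y\rangle=0$ gives $x\perp_B y$. Hence $\perp_B$ coincides with the usual orthogonality, which is symmetric.

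For the converse I would first reduce to dimension three. A normed space is an inner product space if and only if every two-dimensional subspace is, by the Jordan--von Neumann characterization through the parallelogram law. So it suffices to show that every three-dimensional subspace $E$ of $X$ is Euclidean, and symmetry of $\perp_B$ clearly passes to subspaces. In $E$, Birkhoff orthogonality has the following description: $x\perp_B y$ if and only if $y$ lies in the kernel of some norming functional of $x$, i.e.\ $y$ is parallel to a supporting plane of $B_E$ at $x/\|x\|$. Symmetry then says: for each $x\in S_E$ and each supporting plane $H$ at $x$, every $y\in H_0$ (the direction space of $H$) satisfies $y\perp_B x$. So $x$ is parallel to a supporting plane at $y/\|y\|$.

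The key step is then the following. Fix $x\in S_E$ and a supporting plane at $x$ with direction space $H_0$. The \emph{shadow boundary} of $B_E$ in direction $x$ is the set of points of $S_E$ at which some supporting plane contains the direction $x$. By symmetry it contains $S_E\cap H_0$, a central plane section, so the shadow boundary in every direction is planar. I would then invoke Blaschke's theorem: a convex body in $\mathbb{R}^3$ all of whose shadow boundaries are planar is an ellipsoid. This gives that $B_E$ is an ellipsoid, hence $E$ is Euclidean, and so $X$ is an inner product space. Completeness of $X$ then makes it a Hilbert space.

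The main obstacle is the Blaschke step, together with the care needed when the norm is not smooth or not strictly convex, so that supporting planes and shadow boundaries are not unique. To handle this I would first show that symmetry forces smoothness and strict convexity. Suppose $x$ had two distinct supporting planes. Symmetry would then make $x$ Birkhoff-orthogonal-receiving from too large a set of directions, which would place a flat piece on $S_E$. That flat piece would then contradict symmetry at its relative interior points. With these degenerate cases excluded, the map $x\mapsto H_0(x)$ is a well-defined continuous duality map, and the classical argument can be applied directly.
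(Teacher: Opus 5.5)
Your easy direction, the reduction to three-dimensional subspaces $E$, and the translation of symmetry into ``the shadow boundary of $B_E$ in direction $x$ contains the central section $S_E\cap H_0$'' are all correct. The gap is the passage from ``contains'' to ``is planar'', which your form of Blaschke's theorem needs. Planarity requires the shadow boundary to \emph{equal} $S_E\cap H_0$, i.e.\ $E$ must be smooth at every point, and the regularity argument you offer for this is not a proof.

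If $x\in S_E$ has two norming functionals $f_1\neq f_2$, symmetry does give $y\perp_B x$ for every $y$ in the double wedge $\bigcup_{0\le\lambda\le 1}\ker\bigl((1-\lambda)f_1+\lambda f_2\bigr)$. This does force segments of $S_E$ parallel to $x$. But you give no reason why anything goes wrong at the relative interior points $p$ of such a segment. There $p\perp_B x$ holds automatically, and $x\perp_B p$ only asks that some norming functional of $x$ vanish at $p$. In two dimensions this configuration is perfectly compatible with symmetry: it is exactly the affine-regular hexagon of Section~4, where the edge $[p,-q]$ is parallel to the vertex $p+q$, and that space is a Radon plane. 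So no local or two-dimensional argument can produce the contradiction you claim. It would have to come from a global, genuinely three-dimensional argument, and that is essentially the whole content of James's theorem. As written, the hardest step is asserted rather than proved.

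The regularity detour is unnecessary. Your own observation already says $h\perp_B x$ for all $h\in H_0$, i.e.\ $\|h+sx\|\ge\|h\|$. So $h+sx\mapsto h$ is a norm-one projection onto $H_0$, whether or not $E$ is smooth.

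It is simpler still to go the other way. For a plane $P=\ker f\subset E$, choose $z\in S_E$ with $f(z)=\|f\|$, which is possible since $\dim E<\infty$. For $h\in P$ we get $\|z+th\|\ge f(z+th)/\|f\|=1$, so $z\perp_B P$. Symmetry then gives $P\perp_B z$, so the projection onto $P$ along $z$ has norm one. Kakutani's theorem now finishes the proof with no smoothness or strict convexity hypotheses: if the dimension is at least $3$ and every two-dimensional subspace is the range of a norm-one projection, the space is an inner product space. This is the standard route to the result; the paper itself gives no proof and simply quotes it from James.

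If you prefer your projections with prescribed kernel $\mathbb{R}x$, pass to adjoints. They give norm-one projections onto every plane $x^{\perp}$ of $E^{*}$, so Kakutani's theorem applies to $E^{*}$, and hence $E$ is Euclidean as well.
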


It is worth emphasizing that the space dimension assumption in the previous lemma cannot be discarded. We first revisit the definition of Radon planes, which is essential to our analysis.

\begin{definition} \rm{\cite{25}}
	A two-dimensional Banach space in the Birkhoff orthogonality is symmetric, defined as a Radon plane.
\end{definition}

From Proposition \ref{P2} and the fact that $X$ is a Radon plane, we obtain
$
A_2(X, B) \leq \frac{3}{2},
$
which yields the following conclusion.

\begin{corollary}\label{1.1}
	Suppose that $X$ is a Radon plane, then $A^B_{t}(X) \leq 1+t-\frac{1}{2}\min\{1,t\}.$
\end{corollary}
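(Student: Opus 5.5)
The statement follows directly from the upper estimate in Proposition \ref{P2}, which relates $A^B_{t}(X)$ and $A_2(X,B)$, combined with the known Radon-plane bound $A_2(X,B)\le \frac{3}{2}$ recalled in the introduction (property (2) of $A_2(X,B)$, from \cite{5}). I would not go back to unit vectors, and would work only with these two inequalities.

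\textbf{Step 1.} Recall the right-hand inequality of Proposition \ref{P2}: for every Banach space $X$ and every $t>0$,
\[
A^B_{t}(X)\le 1+t-\bigl(2-A_2(X,B)\bigr)\min\{1,t\}.
\]
This was obtained from Lemma \ref{2.2} applied to $\|x+ty\|$ and $\|tx-y\|$.

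\textbf{Step 2.} Since $X$ is a Radon plane, $A_2(X,B)\le\frac32$. Hence
\[
2-A_2(X,B)\ge \tfrac12 .
\]

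\textbf{Step 3.} Multiply by $\min\{1,t\}>0$, which preserves the inequality:
\[
\bigl(2-A_2(X,B)\bigr)\min\{1,t\}\ge \tfrac12\min\{1,t\}.
\]
Subtracting this from $1+t$ reverses the direction, so
\[
1+t-\bigl(2-A_2(X,B)\bigr)\min\{1,t\}\le 1+t-\tfrac12\min\{1,t\}.
\]

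\textbf{Step 4.} Chaining Step 1 with Step 3 gives
\[
A^B_{t}(X)\le 1+t-\tfrac12\min\{1,t\},
\]
which is the claim.

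There is no real obstacle. The only point that needs care is the direction of the inequalities in Step 3: the coefficient $\min\{1,t\}$ is positive, and the term is subtracted, so a lower bound on $2-A_2(X,B)$ becomes an upper bound on $A^B_{t}(X)$. If one wanted a self-contained argument, the substantive input is the Radon-plane bound $A_2(X,B)\le\frac32$, which I take as given from \cite{5}.
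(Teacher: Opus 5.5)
Your proof is correct and is the same as the paper's. The paper gets the corollary in one line by inserting the Radon-plane bound $A_2(X,B)\le\frac{3}{2}$ into the upper estimate $A^B_{t}(X)\le 1+t-(2-A_2(X,B))\min\{1,t\}$, and your Steps 2--4 simply spell out the monotonicity in $A_2(X,B)$ that the paper leaves implicit.
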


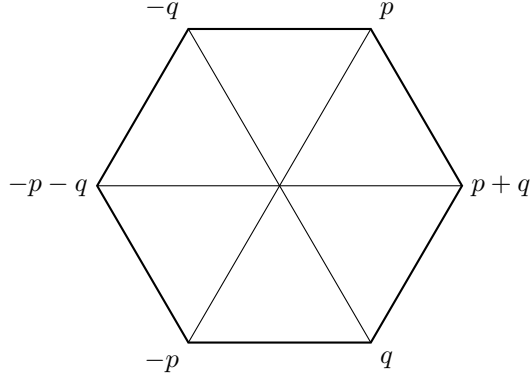
\begin{figure}[htbp]
	\centering
	\begin{tikzpicture}[scale=1.2, >=Stealth]
		
		\coordinate (p) at (2,0);
		\coordinate (pq) at (1,1.732);
		\coordinate (q) at (-1,1.732);
		\coordinate (minus_p) at (-2,0);
		\coordinate (minus_pq) at (-1,-1.732);
		\coordinate (minus_q) at (1,-1.732);
		
		\draw[thick] (p) -- (pq) -- (q) -- (minus_p) -- (minus_pq) -- (minus_q) -- cycle;
		
		\draw[thin] (p) -- (minus_p);
		\draw[thin] (q) -- (minus_q);
		\draw[thin] (minus_pq) -- (pq);
		\node[right] at (p) {$p+q$};
		\node[above right] at (pq) {$p$};
		\node[above left] at (q) {$-q$};
		\node[left] at (minus_p) {$-p-q$};
		\node[below left] at (minus_pq) {$-p$};
		\node[below right] at (minus_q) {$q$};

	\end{tikzpicture}
	\caption{The unit sphere of the affine regular hexagon}
	\label{fig:affine_hexagon}
\end{figure}	

By an affine-regular hexagon, any non-degenerate affine image obtained from a regular hexagon via affine transformation \rm{\cite{26}}. We observe that the space $\mathbb{R}^{2}$ with the norm $ \ell_{\infty}-\ell_{1} $ is a Radon plane \rm{\cite{24}}\label{L3} (see Figure 2) and its unit sphere is an affine-regular hexagon. we have below example to achieve the upper bound of the corollary.

\begin{example}
	Suppose that $X$ is a Radon plane $ \ell_{\infty}-\ell_{1} $, the space $\mathbb{R}^{2}$ with the norm defined by
	$$
	\|(x_1, x_2)\| = \begin{cases} \|(x_1, x_2)\|_\infty, & x_1x_2 \geq 0, \\ \|(x_1, x_2)\|_1, & x_1x_2 \leq 0. \end{cases}
	$$
	Then $A^B_{t}(X)=1+t-\frac{1}{2}\min\{1,t\}.$
\end{example}

\begin{proof}	
	Suppose that $x=(1,0)$ and $y=(0,1)$. We can infer that $x, y \in S_X$ such that $x \perp_B y$.
	In addition, when $t\geq1$, we have $\|x+ty\|=1$, $\|tx-y\|=t+1.$ 
	Thus, it follows from Corollary \ref{1.1} that we derive
$A^B_{t}(X)=1+\frac{t}{2}$.
		
	Similary, when $0\leq t\leq 1$, we have $\|x+ty\|=t$, $\|tx-y\|=t+1,$ so we can obtain that  $A^B_{t}(X)=\frac{1}{2}+t$.
	
	Thus we can complete the example.

\end{proof}

Next, we present a definitive result that linking the upper bound of $A^B_{t}(X)$ on an arbitrary Radon plane to the condition that the unit sphere is an affine-regular hexagon (see Figure 3). 

\begin{theorem} \label{15}
Suppose that $X$ is a Radon plane. Then the equality $A^B_{t}(X)=1+t-\frac{1}{2}\min\{1,t\}$ 
holds if and only if the unit sphere $S_X$ of $X$
is an affine-regular hexagon.
\end{theorem}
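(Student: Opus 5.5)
The plan is to reduce the statement to the known characterization for $A_2(X,B)$ (property (4) in the introduction): for a Radon plane, $A_2(X,B)=\frac32$ if and only if $S_X$ is an affine-regular hexagon. The two-sided estimate of Proposition \ref{P2} links the two constants,
$$
\min\{1,t\}A_2(X,B)\le A^B_t(X)\le 1+t-(2-A_2(X,B))\min\{1,t\}.
$$
The upper bound is an increasing function of $A_2(X,B)$. Setting $A_2(X,B)=\frac32$ in it gives exactly $1+t-\frac12\min\{1,t\}$, which is Corollary \ref{1.1}.

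For necessity, assume $A^B_t(X)=1+t-\frac12\min\{1,t\}$. The upper estimate of Proposition \ref{P2} then gives
$$
1+t-\tfrac12\min\{1,t\}\le 1+t-(2-A_2(X,B))\min\{1,t\}.
$$
Since $\min\{1,t\}>0$, this yields $A_2(X,B)\ge \frac32$. In a Radon plane we also have $A_2(X,B)\le\frac32$, so $A_2(X,B)=\frac32$, and property (4) shows that $S_X$ is an affine-regular hexagon. This direction is formal.

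For sufficiency, assume $S_X$ is an affine-regular hexagon. An affine-regular hexagon is the image of the hexagon of the $\ell_\infty$--$\ell_1$ example under a linear isomorphism $T$ of $\mathbb{R}^2$. Such a $T$ is an isometry between the two normed planes. It preserves Birkhoff orthogonality, unit spheres and the expression $\|x+ty\|+\|tx-y\|$, so $A^B_t$ is invariant under it. The value therefore equals the one computed in the preceding Example, namely $1+t-\frac12\min\{1,t\}$.

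The main obstacle is the sufficiency direction, because the Example only exhibits one pair $x=(1,0)$, $y=(0,1)$ and gets the value from Corollary \ref{1.1}. I would check this pair directly. For $t\ge1$ one needs $\|x+ty\|+\|tx-y\|=2+t$. With the sign convention used there, $(1,t)$ lies in the $\ell_\infty$ sector and $(t,-1)$ in the $\ell_1$ sector, giving $t+(t+1)=2t+1$. This is not $2+t$ when $t>1$, so the paper's stated values ($\|x+ty\|=1$) look inconsistent. I would therefore recheck the pair, or choose another Birkhoff-orthogonal pair such as $x=(1,1)$, $y=(1,0)$ with suitable signs, and verify by a short case analysis on the sectors. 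If no orthogonal pair attains the bound exactly, I would fall back on the lower estimate $A^B_t(X)\ge\min\{1,t\}A_2(X,B)$ together with the convexity from Lemma \ref{11}. If that also fails, the target value itself may need correcting, for instance to $1+t-\frac12\max\{1,t\}$ or a similar expression. Confirming the exact value on the hexagon is the step I expect to require the most care.
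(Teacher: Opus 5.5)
Your necessity argument matches the paper's. Transporting the sufficiency question to the $\ell_\infty$--$\ell_1$ plane by a linear isometry is a valid substitute for the paper's intrinsic computation. The paper uses the vertices $\pm p,\pm q,\pm(p+q)$ and the pair $x=p+q$, $y=p$, which is $x=(1,0)$, $y=(1,1)$ in your model.

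The gap is that you leave sufficiency unproved, and the doubt you raise comes from an arithmetic slip. For $t\ge 1$ the target is $1+t-\frac12\min\{1,t\}=t+\frac12$, so the sum that must be attained is $2t+1$, not $2+t$. The value $2+t$ comes from the paper's Example, whose two cases are interchanged: in fact $\|(1,t)\|=t$ for $t\ge1$ and $\|(1,t)\|=1$ for $t\le1$.

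Your own computation $\|(1,t)\|_\infty+\|(t,-1)\|_1=t+(t+1)=2t+1$ is therefore exactly what is needed. For $0<t\le 1$ the same pair gives $1+(1+t)=2+t=2\bigl(1+\frac t2\bigr)$. Since $(1,0)\perp_B(0,1)$ (indeed $\|(1,s)\|=\max\{1,s\}$ for $s\ge0$ and $1+|s|$ for $s<0$), this gives $A^B_t(X)\ge 1+t-\frac12\min\{1,t\}$ for every $t>0$. Corollary~\ref{1.1} supplies the reverse inequality, which closes the proof.

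You should drop the fallbacks you list.
\begin{itemize}
\item The lower estimate $\min\{1,t\}A_2(X,B)=\frac32\min\{1,t\}$ is strictly below $1+t-\frac12\min\{1,t\}$ whenever $t\neq1$, so it can never give equality.
\item The suggested ``correction'' $1+t-\frac12\max\{1,t\}$ equals $1+\frac t2$ for $t\ge1$. It would make the statement false for $t>1$, because the hexagon attains $t+\frac12>1+\frac t2$.
\end{itemize}
The theorem is correct as stated. The only missing step is comparing your computed value with the right target.
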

\begin{proof}
	For any Radon plane, if we suppose that $A^B_{t}(X)=1+t-\frac{1}{2}\min\{1,t\}$, then from Proposition \ref{P2} we have $A_2(X,B)\geq\frac{3}{2}$ and the conclusion that  $ A_2(X,B)\leq \frac{3}{2}$ in Radon plane, we can obtain that  $A_2(X,B)=\frac{3}{2}$. Therefore $S_X$ is an affine-regular hexagon.
	
	When $0\leq t \leq 1$, suppose that an affine-regular hexagon is $S_X$ , we have distinct points $ p, q \in S_X $ such that $\pm p, \pm (p+q), \pm q $ are the vertices of $S_X$ (see Figure 3). Take $x=p+q$ and $y =p.$ Then it is evident that $ x, y \in S_X $ with $x \perp_B y $.
	
	In addition, we are also able to obtain
	$$
	\|x+ty\|=\|(1+t)p+q\|=(1+t)\Big\|\frac{t}{1+t}p+\frac{1}{1+t}(q+p)\Big\|=1+t,
	$$
	$$
	\|tx-y\|=\|tq-(1-t)p\|=\|tq+(1-t)(-p)\|=1.
	$$
	Hence, by the definition of $A^B_{t}(X)$, we have
	$$
	1+\frac{t}{2}=\frac{\|x+ty\|+\|tx-y\|}{2}\leq A^B_{t}(X)\leq 1+\frac{t}{2},
	$$
	which means that $A^B_{t}(X)=1+\frac{t}{2}$.
	
	Similary, when $t\geq 1$, we can deduce that $A^B_{t}(X)=t+\frac{1}{2}$. This completes the proof.
\end{proof}

\begin{remark}
	Let $X$ be the space $\mathbb{R}^2$ endowed with the norm
	$$
	\|x\|=\max \left\{\left|x_1\right|,\left|x_2\right|, \frac{\left|x_1\right|+\left|x_2\right|}{\sqrt{2}}\right\},
	$$
	note that the space $X$ is not an inner product space with $A_2(X,B)=\sqrt{2}$ (\cite{5}).
	
	And from Proposition \ref{P2} we have $$
	A^B_{t}(X) \geqslant \min \{1, t\}A_2(X,B).
	$$ 
	When $$A^B_{t}(X) =\sqrt{2} \min \{1, t\},$$ we can obtain  $A_2(X,B)\leq\sqrt{2}$, that is, $A_2(X,B)=\sqrt{2}$.
	
	So, when $A^B_{t}(X) = \sqrt{2} \min \{1, t\}$, we are currently unable to find a suitable method to determine whether a space is an inner product space.
	
\end{remark}\label{222222} 
The following result is interesting: when we lift the orthogonality condition on the constant, the new constant is able to characterize the inner product space.
\begin{remark}
	For $t>0$, if we set $$A^{\prime }_{t}(X)=\sup\Big\{\frac{\|x+t y\|+\|tx- y\|}{2}: x,y \in S_X \Big\},$$
	we can obtain that $$ A^{\prime }_{t}(X)\geq \min \{1, t\} A_2(X).$$
	If $A_t^{\prime }(X)=\sqrt{2} \min\{1, t\}$, we can deduce that $A_2(X) \leq \sqrt{2}$. 
	
	Thus, we have  $A_2(X)=\sqrt{2}$, and hence $J(X)=\sqrt{2}$. So, when $\operatorname{dim} X \geqslant 3$. By \cite{16}, it can be deduced that it is an inner product space.
	
	We can restate the above result, which characterizes inner product spaces via a constant, in the following inequality form.
	
	For any fixed $t>0$, if
	$$
	\|x+t y\|+\|t x-y\| \leq 2 \sqrt{2} \min \{1, t\}, \hfill \text{for all} \ x,y \in S_X,
	$$ 
	then we can conclude that $X$ be an inner product space.
	This result can be analogized to Lemma \ref{3.2}.
\end{remark}

\begin{remark}

In this paper, our geometric constants are only considered for the case involving a single parameter \(t\). For the two-parameter case, similar arguments apply, and we can similarly establish their upper and lower bounds.
Most of the other results remain valid as well.

\end{remark}

At the end of the article, we would like to pose some open questions:

\text { Problem } 1: 

Due to the properties of Birkhoff orthogonality, it is an interesting topic to incorporate the classical von Neumann-Jordan constant into the framework of Birkhoff orthogonality and redefine it as follows:
\[
C_{\mathrm{NJ}}(X)=\sup \left\{\frac{\|x+y\|^2+\|x-y\|^2}{2\left(\|x\|^2+\|y\|^2\right)}: x, y \in X,\ (x,y)\neq(0,0),\
x \perp_B y\right\}.
\]
Of course, the estimation of related inequalities also presents considerable difficulties. Furthermore, it is also natural to incorporate the \(p\)-th von Neumann-Jordan constant \cite{3333} into Birkhoff orthogonality. For related results on the \(p\)-th Birkhoff orthogonality, we refer the reader to \cite{4444}.

\text { Problem } 2:  

Remark \ref{222222} motivates us to investigate whether we can improve the proof of the well-known Lemma \ref{3.2} by defining the constant as follows: set
\[
F(X)=\sup_{t>0}\left\{\frac{\|x+t y\|-\|tx+ y\|}{2}: x,y \in S_X \right\}.
\]

\bmhead{Acknowledgements}

Thanks to all the members of the Functional Analysis Research Team at the School of Mathematics and Statistics, Anqing Normal University, for their valuable discussions and corrections regarding the
challenges and errors encountered in this article. 
We also thank Dr. Dandan Du for her valuable comments on this paper.

\end{document}